\def\pref#1{(\ref{#1})}
\def\p@enumii{}
\newtheorem{thm}{Theorem}[section]
\newtheorem{lem}[thm]{Lemma}
\newtheorem{prop}[thm]{Proposition}
\newtheorem{cor}[thm]{Corollary}
\theoremstyle{definition}
\newtheorem{defn}[thm]{Definition}
\theoremstyle{remark}
\numberwithin{equation}{section}
\newcommand{\V}{\mathrm{V}}
\newcommand{\E}{\mathrm{E}}
\newcommand{\N}{\mathrm{N}}
\renewcommand{\L}{\mathrm{L}}
\renewcommand{\b}[1]{\overline{#1}}
\newcommand{\rg}{\rangle}
\renewcommand{\lg}{\langle}
\newcommand{\se}{\subseteq}
\newcommand{\link}{\mathrm{link}}
\newcommand{\sm}{\setminus }
\renewcommand{\iff}{\Leftrightarrow}
\newcommand{\give}{$\Rightarrow$}
\newcommand{\rgive}{$\Leftarrow$}
\newcommand{\ifof}{if and only if }
\newcommand{\tohi}{\emptyset}
\newcommand{\tl}[1]{\widetilde{#1}}
\newcommand{\rnd}{\partial}
\newcommand{\blt}{\bullet}
\newcommand{\im}{\mathrm{Im}\,}
\title{Algebraic Properties of Clique Complexes of Line Graphs}
\author{Ashkan Nikseresht \\
\it\small Department of Mathematics, College of Science, Shiraz University,\\
\it\small 71457-13565, Shiraz, Iran\\
\it\small E-mail: ashkan\_nikseresht@yahoo.com}
\date{}
\begin{document}
\maketitle

\begin{abstract}
Let $H$ be a simple undirected graph and $G=\L(H)$ be its line graph. Assume  that $\Delta(G)$ denotes the clique
complex of $G$. We show that $\Delta(G)$ is sequentially Cohen-Macaulay if and only if it is shellable if and only if
it is vertex decomposable. Moreover if $\Delta(G)$ is pure, we prove that these conditions are also equivalent to
being strongly connected. Furthermore, we state a complete characterizations of those $H$ for which $\Delta(G)$ is
Cohen-Macaulay, sequentially Cohen-Macaulay or Gorenstein. We use these characterizations to present linear time
algorithms which take a graph $G$, check whether $G$ is a line graph and if yes, decide if $\Delta(G)$ is
Cohen-Macaulay or sequentially Cohen-Macaulay or Gorenstein.
\end{abstract}
{Keywords}: Line graph;  Cohen-Macaulay ring; Gorenstein ring; Simplicial complex; Edge ideal; \\
{Mathematics Subject Classification (2020):} 13F55; 05E40; 05E45.


\section{Introduction}
In this paper, $K$ denotes a field and $S=K[x_1,\ldots, x_n]$. It is known that using several transformations on a
graded ideal $I$ of $S$, such as taking generic initial ideal, polarization, \ldots, we can get a square-free monomial
ideal $J$ generated in degree 2, such that $S/I$ is Cohen-Macaulay (CM for short) \ifof $S/J$ is so (see \cite{hibi}
and also \cite{bary}). But then $J$ is the edge ideal of a graph and this shows why it is important to study algebraic
properties of edge ideals of graphs. Let $G$ be a simple graph on vertex set $\V(G)=\{v_1,\ldots, v_n\}$ and edge set
$\E(G)$. Then the \emph{edge ideal} $I(G)$ of $G$ is the ideal of $S$ generated by $\{x_ix_j|v_iv_j\in \E(G)\}$. A
graph $G$ is called CM (resp. Gorenstein) when $S/I(G)$ is CM (resp. Gorenstein) for every field $K$. Many researchers
have tried to combinatorially characterize CM or Gorenstein graphs in specific classes of graphs, see for example,
\cite{obs to shell,tri-free, alpha=3, large girth,planar goren,CM circulant,Goren circulant, very well, hibi, CWalker,
Trung const}).

The family of cliques of a graph $G$ forms a simplicial complex which is called the \emph{clique complex of $G$} and is
denoted by $\Delta(G)$. Algebraic properties of simplicial complexes in general also has got a wide attention recently,
see for example \cite{bary,hibi,my vdec,our chordal,stanley} and the references therein. If we denote the
Stanley-Reisner ideal of $\Delta$ by $I_\Delta$, then we have $I_{\Delta(G)}=I(\b G)$, where $\b G$ denotes the
complement of the graph $G$. Thus studying clique complexes of graphs algebraically, is another way to study algebraic
properties of graphs.

Suppose that $H$ is a simple undirected graph and $G=\L(H)$ is the \emph{line graph} of $H$, that is, edges of $H$ are
vertices of $G$ and two vertices of $G$ are adjacent if they share a common endpoint in $H$. Line graphs are well-known
in graph theory and have many applications (see for example \cite[Section 7.1]{west}). In particular, Theorems 7.1.16
to 7.1.18 of \cite{west}, state some characterizations of line graphs and methods that, given a line graph $G$, can
find a graph $H$ for which $G=\L(H)$. Indeed, in \cite{lehot} a linear time algorithm is presented that, given a graph
$G$, it checks if $G$ is a line graph and if $G$ is a line graph, it returns a graph $H$ such that $G=\L(H)$.

Here we study some algebraic properties of $\Delta(G)$. Because of the aforementioned results and algorithm, we do this
in terms of the graph $H$ for which $G=\L(H)$. First in Section 2, we characterize combinatorially those $H$ with a CM
or a Gorenstein $\Delta(\L(H))$. As we will see, the class of such graphs is very limited.

Then in Section 3, we characterize those $H$ whose line graph has a sequentially CM clique complex. Recall that a
simplicial complex $\Delta$ is called sequentially CM, when its pure skeletons $\Delta^{[i]}=\lg F\in \Delta \big|
|F|=i+1 \rg$ are CM for all $i$ (see also \cite{stanley}, for an equivalent algebraic definition). Our characterization
enables us to present a linear time algorithm which decides whether $\Delta(\L(H))$, for a given graph $H$, is
sequentially CM or not.  Thus our algorithm enables us to efficiently decide whether a given graph is a line graph and
if yes, whether its clique complex is sequentially CM.

Here we say a simplicial complex $\Delta$ is CM (resp. Gorenstein) over $K$, when $S/I_\Delta$ is CM (resp.
Gorenstein). If $\Delta$ is CM (resp. Gorenstein) over every field $K$, then we simply say that $\Delta$ is CM (resp.
Gorenstein). For definitions and basic properties of simplicial complexes and graphs one can see \cite{hibi} and
\cite{west}, respectively. In particular, all notations used in the sequel without stating the definitions are as in
these two references.

                     \section{Line graphs with CM or Gorenstein clique complexes}

In what follows, $H$ is a simple undirected graph with at least one edge and $G=\L(H)$ is the line graph of $H$. We are
going to study when $\Delta(G)$ is CM. It is well-known that a CM complex is pure (see \cite[Lemma 8.1.5]{hibi}). The
following lemma characterizes those $H$ with pure $\Delta(G)$. Here $K_n$ denotes the complete graph on $n$ vertices.
Also we call a set of $r$ edges of $H$ adjacent to a common vertex $v$, an \emph{$r$-star} (or simply, a \emph{star})
of $H$ at $v$.

\begin{lem}\label{Del pure}
If $H$ is connected, then the clique complex of $G$ is pure, \ifof one of the following holds.
\begin{enumerate}
\item \label{Del pure 1} $H$ has no triangles and there is an integer $r>3$ such that every vertex of $H$ has degree
    either one or $r$.

\item \label{Del pure 2} The maximum degree of vertices of $H$ is 3 and every vertex of $H$ with degree 2 is
    contained in a triangle.

\item \label{Del pure 3} $H$ is a path or a cycle.
\end{enumerate}
\end{lem}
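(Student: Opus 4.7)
The plan is to use the classical description of cliques in line graphs: a set of pairwise adjacent vertices of $\L(H)$ corresponds to a set of pairwise incident edges in $H$, and an easy case analysis shows that any such set either shares a common vertex (a \emph{star}) or consists of the three edges of a triangle of $H$. Consequently, every maximal clique of $G$ is either a star $S_v = \{e \in \E(H) \mid v \in e\}$ at some vertex $v$, or the edge set of a triangle of $H$.

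The first step is to determine exactly when each of these candidates is in fact a maximal clique. A triangle always yields a maximal $3$-clique, since no single edge of $H$ can be incident to all three of its edges. A star $S_v$ with $\deg_H(v) \ge 3$ is always maximal, because any further edge adjacent to every element of $S_v$ would have to meet at least three distinct neighbours of $v$, which is impossible. A star $\{vw_1,vw_2\}$ with $\deg_H(v) = 2$ is maximal exactly when $w_1w_2 \notin \E(H)$, i.e. when $v$ lies in no triangle; otherwise it is absorbed into the corresponding triangle clique. A star at a degree-$1$ vertex can contribute a maximal clique only if $H = K_2$.

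With this enumeration in hand, purity of $\Delta(G)$ becomes a purely numerical condition on $H$, which I would extract by splitting on $r := \max_{v\in \V(H)} \deg_H(v)$. If $r \ge 4$, a vertex of degree $r$ contributes a maximal $r$-clique, forcing every maximal clique to have size $r$; this rules out triangles (producing $3$-cliques) as well as vertices of degree $2$ or $3$ (producing $2$- or $3$-cliques), leaving exactly the configuration (i). If $r = 3$, every maximal clique has size at most three, and eliminating maximal $2$-cliques amounts to requiring every degree-$2$ vertex to lie in a triangle, which is precisely (ii). If $r \le 2$, connectedness forces $H$ to be a path or a cycle, which is (iii). The converse direction is a direct verification that in each listed family the above list of maximal cliques is size-homogeneous.

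I do not expect any serious obstacle beyond the bookkeeping that determines when a star is maximal. The one genuinely delicate point is the asymmetric role of degree-$2$ vertices: one that lies inside a triangle never produces an extra maximal $2$-clique (its star is contained in the triangle clique), whereas one that lies outside every triangle always does. This dichotomy is precisely what separates the purity-preserving configurations in case (ii) from the configurations that fail to be pure.
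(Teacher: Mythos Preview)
Your proposal is correct and follows essentially the same approach as the paper: both arguments rest on the star/triangle dichotomy for cliques of a line graph and then perform a three-way case split, yours on $r=\max_v \deg_H(v)$ and the paper's on $\dim\Delta(G)$, which amount to the same thing. Your treatment is in fact more explicit than the paper's about exactly when a star is a \emph{maximal} clique (the paper simply says ``each star of $H$ should be contained in a star with size $\dim\Delta(G)+1$'' and leaves the reader to unpack this), so nothing is missing.
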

\begin{proof}
Note that since $H$ is connected, $G$ is also connected. Moreover, each clique of $G$ is either a set of edges sharing
an endpoint $v$ in $H$ , that is, a star of $H$ at $v$, or a triangle of $H$. Thus if $\Delta(G)$ is pure and $\dim
\Delta(G)>2$, then each star of $H$ should be contained in a star with size $\dim \Delta(G) +1$ and $H$ cannot have any
triangles. Thus case \pref{Del pure 1} occurs. If $\dim \Delta(G)=2$, then each star of $H$ should be in a star of size
$3$ or a triangle, that is, $H$ is as in case \pref{Del pure 2}. Finally, if $\dim \Delta(G)<2$, then vertices of $H$
have degree at most 2 and case \pref{Del pure 3} happens. The converse is clear.
\end{proof}

If $\Delta$ is pure and for any two facets $F$ and $G$ of $\Delta$, there is a sequence $F=F_1, \ldots, F_t=G$ of
facets of $\Delta$, such that $|F_i\cap F_{i+1}|= |F_i|-1$ for all $i$, we say that $\Delta$ is \emph{strongly
connected} (or connected in codimension 1). By \cite[Lemma 9.1.12]{hibi}, every CM complex is strongly connected. Thus
we next investigate when $\Delta(G)$ is strongly connected. Note that every strongly connected complex with $\dim>0$ is
connected. In the following, we call two faces $F_1$ and $F_2$ of $\Delta(G)$, \emph{adjacent} when $|F_1\cap
F_2|=|F_1|-1=|F_2|-1$, and by a \emph{strong path} between facets $F_1$ and $F_t$, we mean a sequence $F_1, \ldots,
F_t$ of facets, with $F_i$ adjacent to $F_{i+1}$ for all $1\leq i<t$.
\begin{lem}\label{Del st conn}
Suppose that $\Delta(G)$ is pure. Then $\Delta(G)$ is strongly connected \ifof $H$ is either a star or a path or a
cycle or one of the graphs in Figure \ref{fig1}.
\begin{figure}
\begin{center}
\includegraphics[scale=0.8]{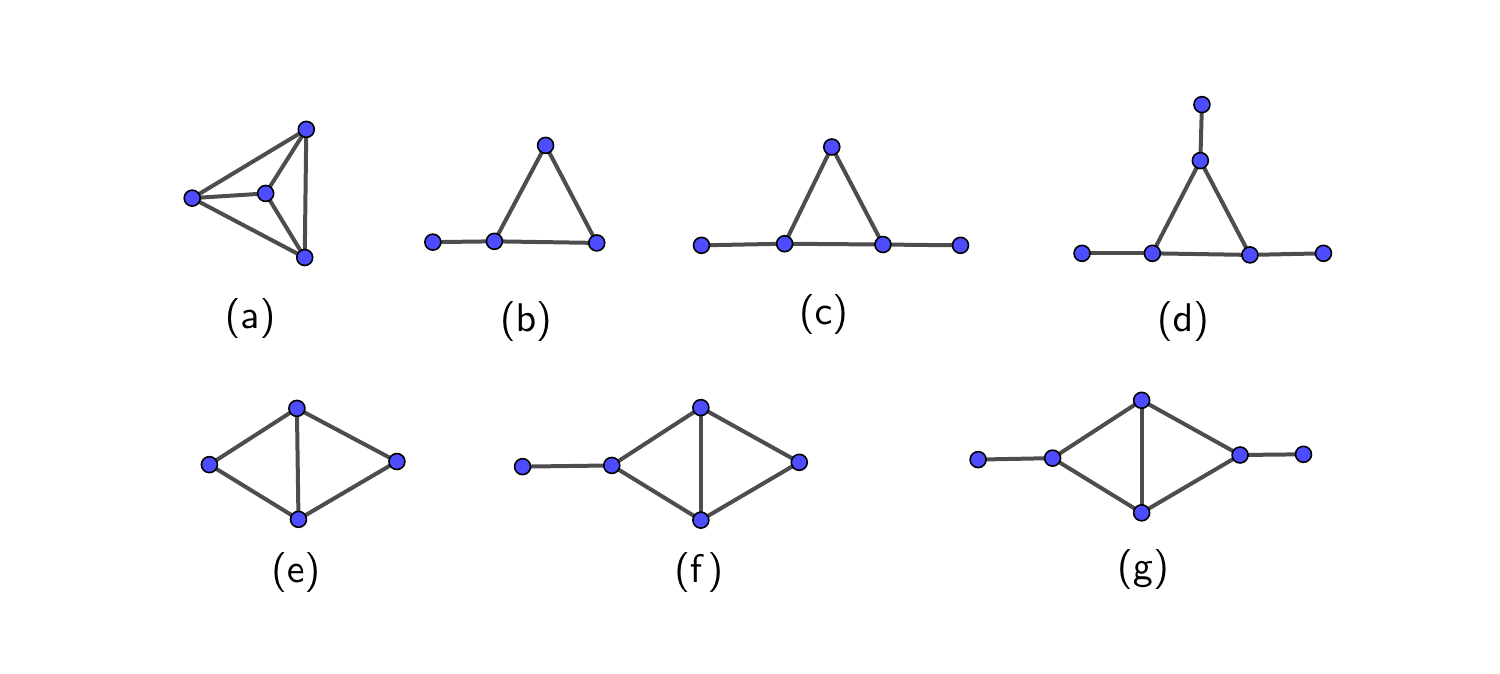}
\caption{Graphs with line graphs having strongly connected clique complexs \label{fig1}}
\end{center}
\end{figure}
\end{lem}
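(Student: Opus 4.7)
The plan is to split the argument using Lemma \ref{Del pure}, which gives three mutually exclusive families of graphs $H$ for which $\Delta(G)$ is pure. Throughout I will work with a connected $H$, since disconnectedness of $H$ forces $\Delta(G)$ to be disconnected and fail strong connectivity (unless only one component contributes facets, but purity rules out mixed situations).

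First I would dispose of the two easy cases. In case (iii) of Lemma \ref{Del pure}, $\Delta(G)$ is at most $1$-dimensional and connected (since $H$ connected forces $G$ connected), hence automatically strongly connected; this contributes the paths and cycles to the list. In case (i), where $H$ is triangle-free with every vertex of degree $1$ or $r$ for some $r>3$, the facets of $\Delta(G)$ are exactly the $r$-stars at degree-$r$ vertices of $H$. The key observation is that two distinct stars of $H$ can share at most a single edge of $H$ (their common endpoint's joining edge, if any), so as vertex sets in $G$ they share at most one element. Adjacency in $\Delta(G)$ requires overlap in $r-1\geq 2$ elements, so no two distinct facets are ever adjacent. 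Therefore strong connectivity forces a unique facet, meaning $H=K_{1,r}$ is a star.

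The core of the argument is case (ii): $\Delta(G)$ has dimension $2$, and facets are either triangles of $H$ or $3$-stars at degree-$3$ vertices of $H$. I would first set up the adjacency graph $\mathcal{A}$ on the set of facets by computing what it means for two facets to share two edges of $H$: two distinct triangles of $H$ share at most one edge, two distinct $3$-stars share at most one edge, while a $3$-star at $v$ is adjacent to a triangle $T$ if and only if $v\in \V(T)$. Thus $\mathcal{A}$ is bipartite, and strong connectivity of $\Delta(G)$ is equivalent to connectivity of $\mathcal{A}$. The problem then becomes: classify connected graphs $H$ of maximum degree $3$, with every degree-$2$ vertex in a triangle, such that the above bipartite graph on (triangles) $\cup$ ($3$-stars at degree-$3$ vertices) is connected.

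The bulk of the work, and the main obstacle, is finishing this combinatorial classification to match the list in Figure \ref{fig1}. My plan here is structural: using $\Delta(H)\leq 3$, I would show each triangle $T$ of $H$ can be linked to other facets only through its vertices, and propagate reachability along edges $uv$ of $H$ (noting that if $u,v$ both have degree $3$, then $\mathrm{star}(u)$ and $\mathrm{star}(v)$ share the edge $uv$ in $G$ but are not adjacent in $\mathcal{A}$, so connecting them requires a triangle through $u$ or $v$). Constraints of the form ``every degree-$2$ vertex lies in a triangle'' prevent long triangle-free stretches and force $H$ to be very small once it is not a star/path/cycle; a systematic pruning on the number and configuration of triangles, pendant edges at degree-$3$ vertices, and isolated degree-$3$ vertices outside any triangle, will cut the possibilities down to the finite list in Figure \ref{fig1}. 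The reverse direction is then a direct inspection: for each graph in Figure \ref{fig1}, exhibit strong paths between every pair of facets.
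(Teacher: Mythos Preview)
Your approach matches the paper's: split by $\dim\Delta(G)$ using Lemma \ref{Del pure}, dispose of $d\le 1$ and $d>2$ via the observation that two stars overlap in at most one edge, and in the $d=2$ case exploit the bipartite adjacency structure between triangles and $3$-stars (triangles are adjacent only to $3$-stars centered at their vertices). Where you leave the final classification as a vague ``systematic pruning'', the paper organizes the $d=2$ case concretely by the number of triangles in $H$: with none, $H=K_{1,3}$; with exactly one, every $3$-star must be centered on that triangle, giving (b)--(d); with exactly two, a $3$-star adjacent to both forces the triangles to share an edge, giving (e)--(g); with three or more, the two-triangle configuration must close up to $K_4$, giving (a). This triangle-count organization is the device that turns your plan into an actual finite case analysis.
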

\begin{proof}
It is easy to check that the line graph of each of the mentioned graphs has a strongly connected clique complex.
Conversely, suppose that $\Delta(G)$ is strongly connected and $d=\dim \Delta(G)$. Note that two stars of $H$ at
different vertices can have at most one edge in common. Also no two different triangles are adjacent and a triangle can
only be adjacent with a 3-star at a vertex of the triangle. So if $d>2$ and $H$ has a vertex $v$ with degree $d+1$, the
$(d+1)$-star at $v$ cannot be adjacent to any other facet of $\Delta(G)$ and hence should be the only facet of
$\Delta(G)$. Therefore, by \pref{Del pure}, all other vertices of $H$ have degree 1 and $H$ is a star. If $d=1$, then
connectedness and strongly connectedness for $\Delta(G)$ are equivalent and by \pref{Del pure}, $H$ is either a path or
a cycle.

Now assume that $d=2$. If $H$ has no triangle, then similar to the case that $d>2$, $H$ is a star. If $H$ has just one
triangle, then every 3-star of $H$ must be adjacent to this triangle, which means, should be centered at a vertex of
the triangle. Thus $H$ is either a triangle, that is, a 3-cycle or one of the graphs (b), (c) or (d) of Figure
\ref{fig1}. Now assume that $H$ has exactly 2 triangles $T_1$ and $T_2$. Then there should be a 3-star $F$, such that
both $T_1$ and $T_2$ are adjacent to $F$. Thus $F$ is centered at a vertex of both $T_1$ and $T_2$ and must share two
edges with each of them. Since degree of each vertex is at most three, it follows that $T_1$ and $T_2$ have a common
edge. Again as every 3-star of $H$ is adjacent to either $T_1$ or $T_2$, $H$ should be one of the graphs (e), (f) or
(g) in Figure \ref{fig1}.

Note that if $H$ has more than 2 triangles, again a 3-star should be adjacent to two triangles and hence $H$ has a
subgraph isomorphic to the graph of Figure \ref{fig1}(e). Lets call this subgraph $H_0$, call the vertices of degree 2
of $H_0$, $a$ and $b$ and call the vertices of degree 3 of $H_0$, $u$ and $v$. Suppose that $H$ has a triangle not
contained in $H_0$. Then there exists a strong path starting with a triangle in $H_0$ and ending with a triangle not in
$H_0$. Let $T$ be the first triangle in this path which is not in $H_0$. Then $T$ and a triangle of $H_0$ are both
adjacent to a 3-star and hence share an edge. But any edge in $H_0$ in incident to either $u$ or $v$, hence $u$ or $v$
is in $T$. Thus $T$ is either $uab$ or $vab$. This means that $a$ and $b$ are adjacent and since all vertices have
degree at most 3, $H$ is the graph in Figure \ref{fig1}(a).
\end{proof}

Recall that for a face $F$ of a simplicial complex $\Delta$, we define $\link_\Delta F= \{G\sm F|F\se G\in \Delta\}$.
Also for a vertex $v$ of $\Delta$, $\Delta-v$ is the simplicial complex with faces $\{F\in \Delta|v\notin F\}$. A
vertex $v$ of a nonempty simplicial complex $\Delta$ is called a \emph{shedding vertex}, when no face of
$\link_\Delta(v)$ is a facet of $\Delta-v$. By \cite[Lemma 3.1]{my vdec}, if $\Delta$ is pure, a vertex $v$ is a
shedding vertex \ifof $\Delta-v$ is pure and $\dim(\Delta-v)=\dim \Delta$. A nonempty simplicial complex $\Delta$ is
called \emph{vertex decomposable}, when either it is a simplex or there is a shedding vertex $v$ such that both
$\link_\Delta v$ and $\Delta- v$ are vertex decomposable. The $(-1)$-dimensional simplicial complex $\{\emptyset\}$ is
considered a simplex and hence vertex decomposable. To characterize line graphs with pure vertex decomposable clique
complexes, we use the following lemmas.

\begin{lem}\label{graph ver dec}
Suppose that $C$ is a graph without isolated vertices. Then $\Delta=\lg \E(C) \rg$ is vertex decomposable if and only
if $C$ is connected.
\end{lem}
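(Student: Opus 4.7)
The plan is to induct on the number of vertices of $C$, translating the shedding condition on $\Delta$ into a graph-theoretic property of $C$. Since $\Delta=\lg \E(C) \rg$ is a pure one-dimensional complex whose facets are exactly the edges of $C$, I will appeal to \cite[Lemma 3.1]{my vdec}: a vertex $v$ is shedding precisely when $\Delta - v$ is pure of dimension one, which translates to the requirement that every neighbor of $v$ in $C$ has another neighbor, i.e., that $v$ is adjacent to no leaf of $C$. Whenever this holds, $\Delta - v$ coincides with $\lg \E(C-v) \rg$ and $C-v$ still has no isolated vertices. I will also need to know that $\link_\Delta v$, which is just the $\deg_C(v)$ isolated points corresponding to the neighbors of $v$, is vertex decomposable; a short induction on the number of points handles this.

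For the ``only if'' direction, I plan to argue by contraposition: assume $C$ is disconnected and induct on $|\V(C)|$ to show $\Delta$ is not vertex decomposable. Two vertices lying in different components of $C$ have no common face in $\Delta$, so $\Delta$ cannot be a simplex; if it were vertex decomposable, it would therefore admit a shedding vertex $v$ with $\Delta - v$ vertex decomposable. Because $v$ is not isolated in $C$, the component of $C$ containing $v$ has at least two vertices, so $C - v$ retains at least one entire component of $C$ disjoint from the rest, and is still disconnected. By induction, $\Delta - v = \lg \E(C-v) \rg$ is not vertex decomposable, contradicting the choice of $v$.

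For the ``if'' direction, I will again induct on $|\V(C)|$. The base cases $|\V(C)| \le 2$ reduce $\Delta$ to $\{\tohi\}$ or a $1$-simplex, both trivially vertex decomposable. For $|\V(C)| \ge 3$ with $C$ connected, the main task is producing a shedding vertex that also leaves $C-v$ connected. I plan to invoke the classical fact that every connected graph on at least two vertices has a non-cut vertex $v$, and observe that such a $v$ cannot be adjacent to any leaf $u$ of $C$: otherwise $u$ would be isolated in $C-v$ while $|\V(C-v)| \ge 2$, making $v$ a cut vertex. Hence $v$ is shedding and $C - v$ is a smaller connected graph with no isolated vertices; induction gives that $\Delta - v$ is vertex decomposable, and combined with vertex decomposability of $\link_\Delta v$ this yields vertex decomposability of $\Delta$.

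The main obstacle I anticipate is producing the shedding vertex in the inductive step of the ``if'' direction, since shedding is a fairly rigid local condition. The appeal to non-cut vertices is what saves the argument: it automatically supplies a vertex whose removal both preserves connectedness (needed for the induction on $\Delta - v$) and avoids leaves (needed for shedding), so both directions of the equivalence fit cleanly into the same induction.
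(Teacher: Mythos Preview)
Your proposal is correct and follows essentially the same approach as the paper: both argue by induction on $|\V(C)|$, both identify that $v$ is a shedding vertex of $\Delta$ precisely when $v$ has no leaf neighbor in $C$, and both rely on the existence of non-cut vertices for the ``if'' direction. The only minor difference is that the paper splits the ``if'' direction into two cases (either $C$ has a leaf, which is then shedding, or $C$ has no leaf and one takes a non-cut vertex), whereas you unify these by observing that when $|\V(C)|\geq 3$ any non-cut vertex automatically avoids leaf neighbors; this is a small presentational streamlining rather than a different route.
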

\begin{proof}
Note that every 0-dimensional simplicial complex is vertex decomposable. Thus $\Delta$ is vertex decomposable \ifof it
can be transformed to a simplex by repeatedly deleting shedding vertices. Also a vertex $v$ of $\Delta$ is a shedding
vertex, \ifof $v$ has no neighbor with degree 1. Assume that $C$ is connected. If $C$ has a vertex $v$ of degree 1,
then $v$ is a shedding vertex unless $C$ is $K_2$, in which case $\Delta$ is a simplex and vertex decomposable. Thus we
can delete $v$ and a get a smaller connected graph, hence the result follows by induction. If $C$ has no vertex of
degree 1, then there is a (shedding) vertex $v$ of $C$ such that $C-v$ is again connected. Again the result follows by
induction.

Conversely, deleting a shedding vertex from $C$ does not decrease the number of connected components of $C$. Hence if
$C$ is not connected, then after deleting any number of shedding vertices, the obtained graph $C'$ is still not
connected and hence $\lg \E(C') \rg$ is not a simplex. Hence $\Delta$ is not vertex decomposable.
\end{proof}

It should be mentioned that \cite[Lemma 3.1]{our chordal}, states that for a graph $C$, the simplicial complex $\lg
\E(C) \rg$ is ``vertex decomposable'' \ifof $C$ is a tree, which is in contradiction with the above lemma. This is
because, as mentioned in a corrigendum to \cite{our chordal}, vertex decomposability  as used in \cite{our chordal},
differs slightly with that used in the literature and here. For more details, see the corrigendum at the end of the
arXiv version of \cite{our chordal}.

\begin{lem}\label{inde graph ver dec}
If each connected component of a graph $C$ is a tree, then $\Delta(\b C)$ is vertex decomposable.
\end{lem}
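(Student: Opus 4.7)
The plan is to induct on $|\V(C)|$. For the base case, if $C$ has no edges then $\b C$ is complete and $\Delta(\b C)$ is a simplex, hence vertex decomposable by definition. For the inductive step I assume $C$ contains at least one edge; since every connected component of $C$ is a tree, $C$ has a leaf $\ell$, and I let $u$ denote its unique neighbor. My aim is to show that $u$ is a shedding vertex of $\Delta(\b C)$ and that the link and deletion at $u$ are again clique complexes of complements of forests, so that the inductive hypothesis applies.

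The deletion and link admit the following descriptions: $\Delta(\b C)-u = \Delta(\overline{C-u})$, because passing to an induced subgraph and taking complement commute, and $\link_{\Delta(\b C)} u = \Delta(\overline{C-\N_C[u]})$, since the $\b C$-neighbors of $u$ are exactly $\V(C)\sm \N_C[u]$. The crucial observation is that after deleting $u$ from $C$ the leaf $\ell$ becomes isolated, hence $\ell$ is adjacent in $\overline{C-u}$ to every other remaining vertex. Consequently every facet of $\Delta(\b C)-u$ contains $\ell$. But $\ell u \in \E(C)$ means $\ell$ and $u$ are non-adjacent in $\b C$, so $F\cup\{u\}$ is not a face of $\Delta(\b C)$ for any facet $F$ of $\Delta(\b C)-u$. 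This is precisely the shedding condition on $u$.

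Since both $C-u$ and $C-\N_C[u]$ arise from $C$ by deleting vertices, each of their connected components is a subgraph of some tree of $C$ and is therefore itself a tree. The inductive hypothesis applied to these two smaller forests then gives vertex decomposability of $\Delta(\b C)-u$ and $\link_{\Delta(\b C)} u$, and hence of $\Delta(\b C)$ itself.

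The main obstacle I anticipate is keeping the deletion-and-link formulas straight in the clique-complex language of $\Delta(\b C)$ rather than in the independence-complex picture of $C$; once those identifications are in place, the ``leaf plus neighbor'' trick goes through cleanly and uniformly, with no separate treatment of isolated vertices of $C$ needed and no need to invoke a cone-type lemma separately.
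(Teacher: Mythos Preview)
Your proof is correct. The shedding verification is clean: since $\ell$ becomes isolated in $C-u$, it lies in every maximal clique of $\overline{C-u}$, and since $\ell u\in\E(C)$ no such facet can be extended by $u$; this is exactly the contrapositive of the paper's shedding definition. The identifications $\Delta(\b C)-u=\Delta(\overline{C-u})$ and $\link_{\Delta(\b C)}u=\Delta(\overline{C-\N_C[u]})$ are correct, and both $C-u$ and $C-\N_C[u]$ are forests, so the induction closes.

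Your route, however, is genuinely different from the paper's. The paper does not argue directly at all: it simply invokes \cite[Theorem 1]{obs to shell}, Woodroofe's result that the independence complex of any graph with no chordless cycle of length other than $3$ or $5$ is vertex decomposable. A forest has no cycles whatsoever, so the lemma is an immediate special case. What you have written is, in effect, the classical ``delete the neighbor of a leaf'' argument specialized to forests --- this is the same mechanism that underlies the chordal case of Woodroofe's theorem (with a simplicial vertex playing the role of your leaf $\ell$). Your version is self-contained and elementary, at the cost of a few lines; the paper's version is a one-line citation but imports a substantially more general theorem than is needed here.
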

\begin{proof}
Immediate consequence of \cite[Theorem 1]{obs to shell}.
\end{proof}

Another combinatorial property which is stronger than being CM is shellability. If there is an ordering $F_1, \ldots,
F_t$ of all facets of $\Delta$ such that for each $i$ we have $\lg F_1, \ldots, F_i \rg \cap \lg F_{i+1} \rg$ is a pure
simplicial complex of dimension $= \dim F_{i+1}-1$, then $\Delta$ is called \emph{shellable} and such an order is
called a \emph{shelling order}. It is well-known that a vertex decomposable complex is shellable (see for example
\cite[Section 2]{obs to shell}) and a pure shellable complex is CM (\cite[Theorem 8.2.6]{hibi}).

\begin{thm}\label{Del main}
Suppose that $H$ is a graph with at least one edge and $G=\L(H)$. Then the following are equivalent for
$\Delta=\Delta(G)$.
\begin{enumerate}
\item \label{Del main 1} $\Delta$ is pure vertex decomposable.
\item \label{Del main 2} $\Delta$ is pure shellable.
\item \label{Del main 3} $\Delta$ is CM (over some field).
\item \label{Del main 4} $\Delta$ is pure and strongly connected.
\item \label{Del main 5} $H$ is either a star or a path or a cycle or one of the graphs in Figure \ref{fig1}.
\end{enumerate}
\end{thm}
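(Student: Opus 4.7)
The plan is to prove the cycle of implications $(\mathrm{i}) \Rightarrow (\mathrm{ii}) \Rightarrow (\mathrm{iii}) \Rightarrow (\mathrm{iv}) \Rightarrow (\mathrm{v}) \Rightarrow (\mathrm{i})$. The implication $(\mathrm{i}) \Rightarrow (\mathrm{ii})$ is recalled just before the statement; $(\mathrm{ii}) \Rightarrow (\mathrm{iii})$ is \cite[Theorem 8.2.6]{hibi}; $(\mathrm{iii}) \Rightarrow (\mathrm{iv})$ combines the purity consequence of Cohen--Macaulayness with \cite[Lemma 9.1.12]{hibi}; and $(\mathrm{iv}) \Rightarrow (\mathrm{v})$ is precisely Lemma \ref{Del st conn}. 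Hence the substance of the theorem is the last step $(\mathrm{v}) \Rightarrow (\mathrm{i})$.

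For $(\mathrm{v}) \Rightarrow (\mathrm{i})$ I would run through the families listed. Purity of $\Delta(\mathrm{L}(H))$ is immediate in each case from Lemma \ref{Del pure}. If $H$ is a star $K_{1,n}$, then $\mathrm{L}(H) = K_n$ and $\Delta$ is a simplex. If $H$ is a path $P_n$ with $n\ge 3$ or a cycle $C_n$ with $n\ge 4$, then $\mathrm{L}(H)$ is itself a path or a cycle, hence connected, without isolated vertex, and without triangles, so $\Delta(\mathrm{L}(H)) = \lg \mathrm{E}(\mathrm{L}(H)) \rg$ and Lemma \ref{graph ver dec} delivers vertex decomposability. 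The degenerate cases $H = K_2$ and $H = C_3$ give $\Delta$ a $0$-simplex and a $2$-simplex, respectively.

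There remain the seven graphs in Figure \ref{fig1}. I would split them into two groups. For graphs (a), (b), (c), (e) the complement $\overline{\mathrm{L}(H)}$ turns out to be a disjoint union of trees (for example, $\overline{\mathrm{L}(K_4)}$ is the perfect matching $3K_2$ on six vertices, while for $H = K_4$ minus an edge one gets $2K_2 \cup K_1$), so Lemma \ref{inde graph ver dec} applies directly. For graphs (d), (f), (g), each $H$ contains a pendant edge $e$; let $v$ be the corresponding vertex of $\mathrm{L}(H)$. Since $e$ lies in no triangle of $H$ and belongs to only one maximal star, a short facet count shows that $\Delta - v = \Delta(\mathrm{L}(H - e))$ is pure of the same dimension as $\Delta$, so $v$ is a shedding vertex by \cite[Lemma 3.1]{my vdec}. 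The link $\link_\Delta v$ is the $1$-simplex on the two remaining edges of the star at the interior endpoint of $e$, so it is vertex decomposable; and the deletion $\Delta - v$ falls into a case that has already been handled: (g) reduces to (f), (f) reduces to (e), and (d) reduces to (c).

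The main obstacle is the bookkeeping in the Figure \ref{fig1} analysis. For each of the seven exceptional graphs one must correctly enumerate the facets of $\Delta(\mathrm{L}(H))$ as the union of the triangles of $H$ with the maximal stars of $H$, and then verify either the forest condition on the complement of the line graph or the pendant reduction. None of the individual verifications is conceptually difficult, but the case distinction has to be carried through without omission, and the vertex-decomposability conclusions depend on the recursive application of Lemmas \ref{graph ver dec} and \ref{inde graph ver dec}.
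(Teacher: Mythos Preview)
Your proposal is correct and follows essentially the same route as the paper: the chain $(\mathrm{i})\Rightarrow(\mathrm{ii})\Rightarrow(\mathrm{iii})\Rightarrow(\mathrm{iv})$ is standard, $(\mathrm{iv})\Leftrightarrow(\mathrm{v})$ is Lemma~\ref{Del st conn}, and the content is $(\mathrm{v})\Rightarrow(\mathrm{i})$, handled by the same trichotomy (star/path-or-cycle/Figure~\ref{fig1}) and the same two tools, Lemmas~\ref{graph ver dec} and~\ref{inde graph ver dec}.

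The only difference is in how you partition the seven exceptional graphs. The paper places (a), (b), (c), (e), \emph{and} (f) in the ``forest complement'' group and invokes Lemma~\ref{inde graph ver dec} directly, reserving the shedding-vertex argument for (d) and (g) alone; after one deletion it again checks the forest condition on $\overline{G-v}$. You instead put only (a), (b), (c), (e) in the forest group and handle (d), (f), (g) by iterated pendant deletions $(\mathrm{g})\to(\mathrm{f})\to(\mathrm{e})$ and $(\mathrm{d})\to(\mathrm{c})$. Both splits work; your recursive reduction is arguably cleaner since it avoids recomputing complements, while the paper's grouping minimises the number of shedding steps. Either way the verification is finite and routine.
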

\begin{proof}
The facts that \pref{Del main 1} \give\ \pref{Del main 2} \give\ \pref{Del main 3} \give\ \pref{Del main 4} are
well-known. Also \pref{Del st conn} shows that \pref{Del main 4} $\iff$ \pref{Del main 5}.

\pref{Del main 5} \give\ \pref{Del main 1}: It is easy to check that in each case $\Delta$ is pure. If $H$ is a star,
then $G$ is a complete graph and $\Delta$ is a simplex. If $H$ is a path or a cycle, then $G$ is also a path or a cycle
and $\Delta(G)= \lg \E(G) \rg$ (unless $H=K_3$, in which case  $\Delta$ is a simplex). Consequently, the result follows
from \pref{graph ver dec}. It is routine to check that if $H$ is any of the graphs in Figure \ref{fig1}, except the
graphs (d) and (g), then each connected component of $\b G$ is a tree and according to \pref{inde graph ver dec},
$\Delta=\Delta(\b{\b G})$ is vertex decomposable.

Now assume that $H$ is the graph (d) in Figure \ref{fig1}. Then $G$ is the graph (a) in Figure \ref{fig2}. If $v$ is
the vertex specified in Figure \ref{fig2}(a), then  $\Delta(G)-v=\Delta(G-v)$ is pure with dimension 2. Hence $v$ is a
shedding vertex of $\Delta$ and since $\b{G-v}$ is a tree, it follows that $\Delta-v$ is vertex decomposable. Also
$\link_\Delta(v)$ is a simplex. Thus, $\Delta$ is vertex decomposable. If $H$ is the graph (g) in Figure \ref{fig1},
then $G$ is the graph (b) in Figure \ref{fig2} and a similar argument, with $v$ as specified in Figure \ref{fig2}(b),
shows that $\Delta$ is vertex decomposable.
\end{proof}
\begin{figure}
\begin{center}
\includegraphics[scale=1.5]{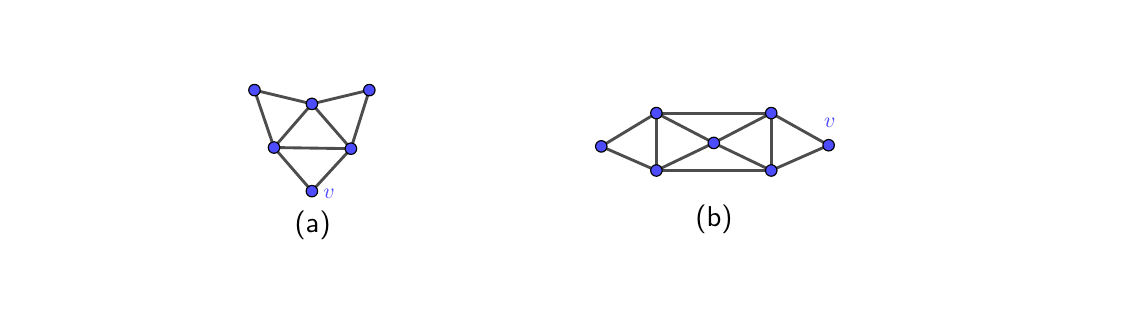}
\caption{The graph $G$ when $H$ is the graph (a) in Figure (\ref{fig1}d) (b) in Figure (\ref{fig1}g) \label{fig2}}
\end{center}
\end{figure}

Now we can also answer the question ``when $\Delta(G)$ is Gorenstein?'' Note that this question is equivalent to asking
when $\b G$ is a Gorenstein graph. By \cite[Lemma 3.1]{large girth} or \cite[Lemma 3.5]{tri-free}, each Gorenstein
graph $C$ without isolated vertices is a \emph{W$_2$ graph}, that is, $|\V(C)|\geq 2$ and every pair of disjoint
independent sets of $C$ are contained in two disjoint maximum independent sets of $C$.

\begin{cor}\label{Del Gor}
Assume that $H$ is a graph with at least one edge and $G=\L(H)$. Then $\Delta(G)$ is Gorenstein \ifof $H$ is either a
star or a cycle or a path with length at most 3 or one of the graphs in Figure \ref{fig3}.
\end{cor}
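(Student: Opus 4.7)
The plan is to combine Theorem~\ref{Del main} with the $W_2$ necessary condition recalled just before the statement. Since every Gorenstein complex is Cohen--Macaulay, necessity forces $H$ to lie in the list produced by Theorem~\ref{Del main}, namely a star, a path, a cycle, or one of the graphs of Figure~\ref{fig1}. Within this finite family, I would determine exactly which $H$ make $\Delta(G)$ Gorenstein, and show that the answer matches Figure~\ref{fig3} together with the three named infinite families.

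For the sufficiency direction I would identify $\Delta(G)$, up to coning by cone vertices, with something manifestly Gorenstein in each listed case. When $H$ is a star, $G$ is complete and $\Delta(G)$ is a simplex. When $H=C_n$, $G=\L(C_n)=C_n$, so $\Delta(G)=\lg\E(G)\rg$ is a $2$-simplex for $n=3$ and a triangulated $1$-sphere for $n\ge 4$, hence Gorenstein. For $H$ a path of length at most $3$, $\L(H)$ is a single vertex, an edge, or $P_3$; the first two give a simplex, and in the third case the middle vertex is a cone vertex of $\Delta(G)$ whose deletion leaves the $0$-sphere, so Stanley's criterion on the core applies. For each graph in Figure~\ref{fig3} I would construct $G=\L(H)$ explicitly and verify Gorensteinness by exhibiting $\Delta(G)$ as a join of a simplex with a small sphere, or by a direct link-by-link homological check.

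For the necessity direction I would eliminate the remaining candidates using $W_2$ applied to $\b G$. Since independent sets of $\b G$ are exactly cliques of $G=\L(H)$, i.e.\ stars and triangles of $H$ (as noted immediately before Lemma~\ref{Del pure}), the assertion ``$\b G$ is $W_2$'' translates into a concrete combinatorial statement about enlarging disjoint stars-or-triangles of $H$ to edge-disjoint maximum ones. For $H=P_n$ with $n\ge 5$, the graph $\b G$ has no isolated vertex, and the two disjoint singleton cliques $\{1\}$ and $\{2\}$ of $G=P_{n-1}$ witness a $W_2$-failure, since the unique maximum clique of $G$ containing $\{1\}$ is $\{1,2\}$, which meets every maximum clique containing $\{2\}$. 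For each graph of Figure~\ref{fig1} not retained in Figure~\ref{fig3}, I would similarly pinpoint a pair of disjoint cliques of $G$ that cannot be enlarged to a pair of disjoint maximum cliques, contradicting $W_2$. The main obstacle is precisely this finite but delicate graph-by-graph analysis on Figure~\ref{fig1}: because $W_2$ is only necessary and not sufficient, for each graph retained in Figure~\ref{fig3} one must still independently verify Gorensteinness, while for each excluded graph one must produce an explicit bad pair of stars or triangles.
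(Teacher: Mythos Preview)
Your proposal is correct and follows the same overall architecture as the paper: restrict to the Cohen--Macaulay list of Theorem~\ref{Del main}, then use the $W_2$ necessary condition on $\b G$ to eliminate the non-Gorenstein cases. The execution differs in two places. For sufficiency on the graphs of Figure~\ref{fig3} (and short paths), the paper observes that every connected component of $\b G$ is $K_1$ or $K_2$, so $I(\b G)$ is a complete intersection and hence Gorenstein; this is considerably quicker than your proposed join-with-a-sphere or link-by-link verification. For necessity on the Figure~\ref{fig1} graphs not in Figure~\ref{fig3}, the paper avoids producing explicit bad pairs of cliques: for all but graph~(g) it notes that $\b G$ has a degree-$1$ vertex in a non-complete component and invokes a theorem of Staples that such a graph is not $W_2$; for~(g) it observes that $\b G$ has girth~$5$ and invokes a theorem of Pinter that the only $W_2$ graphs of girth $\geq 5$ are $C_5$ and $K_2$. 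For cycles and long paths the paper simply cites \cite[Corollary~2.4]{alpha=3} rather than arguing via the $1$-sphere and an explicit $W_2$ failure as you do. Your route is more self-contained but requires more case-by-case work; the paper's route is shorter by outsourcing to known structural results about $W_2$ graphs.
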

\begin{proof}
Since every Gorenstein complex is CM, we must search for graphs with Gorenstein $\Delta(G)$ between the graphs
mentioned in \pref{Del main}\pref{Del main 5}. Since a simplex is Gorenstein and also the complement of every cycle is
Gorenstein by \cite[Corollary 2.4]{alpha=3}, we deduce that if $H$ is a star or a cycle, then $\Delta(G)$ or
equivalently $\b G$ is Gorenstein. If $H$ is a path of length $n$, then $G$ is a path of length $n-1$ and if $n-1\geq
3$, so by \cite[Corollary 2.4]{alpha=3}, $\b G$ is not Gorenstein. If $n-1<3$, then each connected component of $\b G$
is $K_2$ or $K_1$ and hence $\b G$ is Gorenstein.

If $H$ is one of the graphs in Figure \ref{fig1} and not in Figure \ref{fig3}, except for the case that $H$ is the
graph (g) of Figure \ref{fig1}, then $\b G$ has a vertex of degree 1 in a non-complete connected component. But
according to Theorem 4 of \cite{W2} such a graph is not W$_2$ and hence $\b G$ is not Gorenstein. If $H$ is the graph
(g) in Figure \ref{fig1}, then the smallest cycle of $\b G$ has length 5. But by \cite[Theorem 7]{pinter2}, every W$_2$
graph with girth at least five is either the 5-cycle or $K_2$. Thus in this case also $\b G$ is not W$_2$ nor
Gorenstein. If $H$ is any of the graphs in \ref{fig3}, then every connected component of $\b G$ is $K_2$ or $K_1$ and
hence $\b G$ is Gorenstein.
\end{proof}
\begin{figure}
\begin{center}
\includegraphics[scale=2]{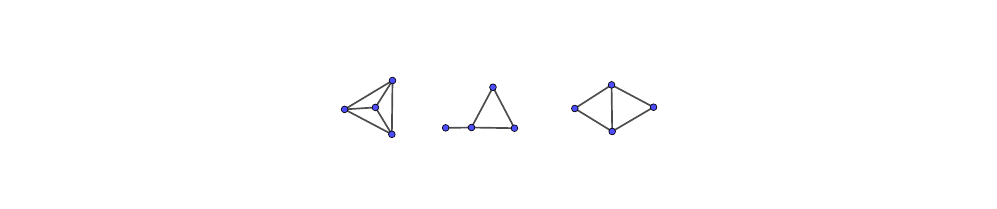}
\caption{Some graphs with Gorenstein $\Delta(G)$. \label{fig3}}
\end{center}
\end{figure}

At the end of this section, we should mention that we can use \pref{Del main} and \pref{Del Gor} to decide whether a
given graph $G$ is a line graph with a CM or a Gorenstein clique complex or not. For Cohen-Macaulayness, we must check
whether $G$ is a complete graph or a path or a cycle or the line graph of one of the  graphs depicted in Figure
\ref{fig1}. And for being Gorenstein, $G$ must be either a complete graph, a cycle, a path length $\leq 2$ or the line
graph of one of the graphs in Figure \ref{fig3}. It is clear that this can be carried out with linear time complexity.

                     \section{Line graphs with sequentially CM clique complexes}

In this section, we consider the case that $\Delta(G)$ is not pure and characterize those $H$ whose line graphs have
sequentially CM clique complexes. Recall that $\Delta^{[i]}=\lg F|F\in \Delta, \dim F=i\rg$ is called the \emph{pure
$i$-skeleton} of $\Delta$ and if each $\Delta^{[i]}$ is CM for $i\leq \dim \Delta$, then $\Delta$ is called
\emph{sequentially CM}. Note that every 0-dimensional complex is CM and a pure 1-dimensional complex is CM \ifof it is
connected (see for example \cite[Exercise 5.1.26]{CM ring}). The following result considers $\Delta^{[i]}$ for $i\geq
3$. In this section, we always assume that $\Delta=\Delta(G)$.

\begin{prop}\label{1 deg>3}
Suppose that $H$ is connected. Then all nonempty $\Delta^{[i]}$ for $i\geq 3$ are CM \ifof $H$ has at most one vertex
$v$ with degree $\geq 4$.
\end{prop}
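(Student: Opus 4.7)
The key structural fact, already used in the proof of \pref{Del pure}, is that every clique of $G=\L(H)$ is either a triangle of $H$ or a star of $H$ at some vertex. Since triangles give only $2$-dimensional faces of $\Delta=\Delta(G)$, every face of $\Delta$ of dimension $i\geq 3$ is an $(i+1)$-subset of a star of $H$ at some vertex of degree $\geq i+1\geq 4$. Hence the facets of $\Delta^{[i]}$ for $i\geq 3$ are exactly such $(i+1)$-subsets of ``large'' stars. I would open the proof by recording this observation, after which both directions are short.

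For the \rgive direction, suppose that $H$ has at most one vertex of degree $\geq 4$. If no such vertex exists then $\Delta^{[i]}=\tohi$ for every $i\geq 3$ and the conclusion is vacuous. Otherwise let $v$ be the unique such vertex with $d=\deg v\geq 4$. By the observation above, every face of $\Delta$ of dimension $\geq 3$ lies inside the star at $v$, so for each $3\leq i\leq d-1$ the complex $\Delta^{[i]}$ is precisely the pure $i$-skeleton of the $(d-1)$-simplex supported on this star. Skeletons of simplices are well-known to be shellable (they are matroid complexes) and therefore CM, finishing this direction.

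For the \give direction I would argue by contrapositive: if $H$ has two distinct vertices $u\neq w$ with $\deg u,\deg w\geq 4$, I would show that $\Delta^{[3]}$ is not strongly connected, and hence not CM by \cite[Lemma 9.1.12]{hibi}. Pick any $4$-subsets $F_u, F_w$ of the stars at $u$ and $w$ respectively; by the setup paragraph these are facets of $\Delta^{[3]}$. As noted in the proof of \pref{Del st conn}, stars of $H$ at distinct vertices share at most one edge, so any two facets of $\Delta^{[3]}$ lying in different stars intersect in at most one vertex of $G$. Consequently, along any strong path $F_u=F_0,F_1,\ldots,F_t=F_w$, consecutive facets must share $3$ vertices and hence must sit inside a common star; this forces every $F_k$ to lie in the star at $u$, yielding the contradiction $u=w$. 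The only substantive obstacle is the initial facet description for $\Delta^{[i]}$ with $i\geq 3$; once it is in hand, both implications are routine.
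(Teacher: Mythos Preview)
Your proof is correct and follows essentially the same approach as the paper's: in both directions you use the structural fact that $i$-faces of $\Delta$ for $i\geq 3$ are subsets of stars at high-degree vertices, deduce that $\Delta^{[i]}$ is a skeleton of a simplex when there is at most one such vertex, and show strong connectedness fails when there are two. The only cosmetic difference is that you work with $\Delta^{[3]}$ in the contrapositive while the paper (with an apparent index slip) phrases the same argument in terms of $4$-stars and $\Delta^{[4]}$; your indexing is the correct one and your strong-path argument spells out the same obstruction the paper gives in one line.
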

\begin{proof}
If $H$ has no vertex with degree $\geq 4$, then $\Delta^{[i]}=\tohi$ for $i\geq 3$. If $H$ has exactly one vertex with
degree $r\geq 4$, then for $3\leq i< r$, $\Delta^{[i]}$ is the pure $i$-skeleton of the simplex with the $r$-star at
$v$ as the only facet. Hence $\Delta^{[i]}$ is CM. If $v_1\neq v_2$ are two vertices of $H$ with degree $\geq 4$ and
$E$ and $F$ are 4-stars of $H$ at $v_1$ and $v_2$, respectively, then $E$ and $F$ are facets of $\Delta^{[4]}$. Note
that every facet of $\Delta^{[4]}$ which is adjacent to $E$, is a 4-star at $v_1$, hence $\Delta^{[4]}$ is not strongly
connected and hence not CM.
\end{proof}

Next we are going to introduce a graph $H'$, such that $\Delta(\L(H'))^{[2]}=\Delta^{[2]}$ and in $H'$ every vertex of
degree 2 is in a triangle. For this we need the following lemmas. It should be mentioned that in this paper, just one
side of the following lemmas are used, but we state and prove both sides for completeness. The first lemma is easy and
its proof is left to the reader.
\begin{lem}\label{isolated}
If $\Gamma$ is a simplicial complex with an isolated vertex $v$, then $\Gamma$ is vertex decomposable  (resp.
shellable, sequentially CM) \ifof $\Gamma - v$ is so.
\end{lem}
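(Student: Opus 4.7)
The key structural observation is that if $v$ is isolated in $\Gamma$, then $\{v\}$ is itself a facet of $\Gamma$, no other face of $\Gamma$ contains $v$, and $\link_\Gamma(v) = \{\emptyset\}$. In particular, every face of $\Gamma$ of positive dimension lies in $\Gamma - v$, so $\Gamma^{[i]} = (\Gamma - v)^{[i]}$ for every $i \geq 1$, while $\Gamma^{[0]}$ and $(\Gamma - v)^{[0]}$ are both zero-dimensional. Since every zero-dimensional complex is CM, the sequentially CM equivalence follows directly from these skeleton identifications.

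For shellability, the plan is to extend any shelling order $F_1, \ldots, F_t$ of $\Gamma - v$ to the order $F_1, \ldots, F_t, \{v\}$ of $\Gamma$. The only new intersection condition to check is $\lg F_1, \ldots, F_t \rg \cap \lg \{v\} \rg = \{\emptyset\}$, which is pure of dimension $\dim\{v\} - 1 = -1$. Conversely, given a shelling of $\Gamma$, I would simply delete the facet $\{v\}$ from the order: because $\{v\} \cap F_i = \emptyset$ for every other facet, its removal from any position does not affect the intersection condition for the remaining facets, and what remains is a shelling of $\Gamma - v$.

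For vertex decomposability, the forward direction is direct: unless $\Gamma = \{\emptyset, \{v\}\}$ is already a simplex (which is vertex decomposable by convention), $\Gamma - v$ strictly contains $\{\emptyset\}$, so no face of $\link_\Gamma(v) = \{\emptyset\}$ is a facet of $\Gamma - v$; hence $v$ is a shedding vertex, and its link (a simplex) together with $\Gamma - v$ (by hypothesis) are both vertex decomposable. The reverse direction is the main obstacle and I would handle it by induction on the number of vertices of $\Gamma$. If $\Gamma$ is a simplex, then since $v$ is isolated forces $\Gamma = \{\emptyset, \{v\}\}$, so $\Gamma - v = \{\emptyset\}$ is a simplex. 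Otherwise pick a shedding vertex $w$ with $\link_\Gamma(w)$ and $\Gamma - w$ vertex decomposable; if $w = v$ we are done, and if $w \neq v$ then $v$ remains isolated in $\Gamma - w$ and satisfies $v \notin \link_\Gamma(w)$, so induction applies to $\Gamma - w$ and yields that $(\Gamma - v) - w = (\Gamma - w) - v$ is vertex decomposable. The delicate point is verifying that $w$ is still a shedding vertex of $\Gamma - v$: since the isolated $v$ lies in no face of $\Gamma$ of positive dimension, deletion of $v$ neither creates nor destroys facets of $\Gamma - w$ that could meet $\link_{\Gamma - v}(w) = \link_\Gamma(w)$, so the shedding condition transfers from $\Gamma$ to $\Gamma - v$, completing the induction.
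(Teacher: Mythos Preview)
Your proof is correct. The paper itself does not give a proof of this lemma at all: it states ``The first lemma is easy and its proof is left to the reader.'' Your argument supplies exactly the routine verification the authors omit, handling the three properties separately via the obvious observations that $\Gamma^{[i]}=(\Gamma-v)^{[i]}$ for $i\geq 1$, that $\{v\}$ may be appended to or deleted from any shelling order without disturbing the intersection conditions, and that $v$ is itself a shedding vertex (for one direction) while an inductive transfer of the shedding property handles the other. There is nothing to compare against, and your write-up is an appropriate expansion of what the paper leaves implicit.
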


\begin{lem}\label{add new ver}
Suppose that $\Gamma$ is a connected simplicial complex and $a$ is a vertex of $\Gamma$. Also assume that $b$ is a new
vertex. Then $\Gamma$ is vertex decomposable (resp. shellable, sequentially CM)  \ifof $\Gamma+\lg ab \rg$ is so.
\end{lem}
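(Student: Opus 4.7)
The plan is to exploit the fact that $b$ is a \emph{free vertex} of $\Gamma'$: the only facet of $\Gamma'$ containing $b$ is $\{a,b\}$ itself, so $\text{link}_{\Gamma'}(b)$ is the simplex $\langle\{a\}\rangle$ and $\Gamma'-b=\Gamma$. Since $\Gamma$ is connected, the degenerate case $\Gamma=\{a\}$ yields $\Gamma' = \langle\{a,b\}\rangle$, a simplex, and all three properties are trivially inherited in both directions. Otherwise $\Gamma$ has at least two vertices, so connectedness forces $\{a\}$ not to be a facet of $\Gamma$, and then neither $\emptyset$ nor $\{a\}$ (the faces of $\text{link}_{\Gamma'}(b)$) is a facet of $\Gamma$. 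Hence $b$ is a shedding vertex of $\Gamma'$.

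The forward direction ($\Gamma \Rightarrow \Gamma'$) is now immediate in all three cases: for vertex decomposability, apply the recursive definition with shedding vertex $b$, VD link, and VD deletion $\Gamma$; for shellability, append $\{a,b\}$ to a shelling $F_1,\ldots,F_t$ of $\Gamma$ and check that $\langle F_1,\ldots,F_t\rangle\cap\langle\{a,b\}\rangle=\langle\{a\}\rangle$ is pure of dimension $0$; for sequentially CM, use Duval's criterion via pure skeletons, noting $\Gamma'^{[i]}=\Gamma^{[i]}$ for $i\ge 2$ while $\Gamma'^{[1]}$ and $\Gamma^{[1]}$ are both connected $1$-complexes (hence CM) because every vertex of a connected complex with $\ge 2$ vertices lies in an edge.

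For the reverse direction, the seqCM case is symmetric via the same skeleton identity. For shellability, I would take a shelling $G_1,\ldots,G_m$ of $\Gamma'$ with $G_k=\{a,b\}$ and claim removing $G_k$ is a shelling of $\Gamma$. The crucial observation is that either $k>1$, in which case the shelling condition at step $k$ requires the pure 0-dimensional intersection $\langle G_1,\ldots,G_{k-1}\rangle\cap\langle\{a,b\}\rangle$ to contain $\{a\}$ (forcing some earlier $G_{j^*}$ with $a\in G_{j^*}$), or $k=1$, in which case connectedness of $\Gamma$ (no isolated vertex to play the role of $G_2$) together with the shelling condition at step $2$ forces $a\in G_2$. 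In either case, for every later $G_i$ with $a\in G_i$, the face $\{a\}$ is still contributed to the intersection by a facet other than $G_k$, so the remaining shelling conditions are preserved verbatim.

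For vertex decomposability in the reverse direction, I would induct on $|V(\Gamma')|$. Let $w$ be a shedding vertex witnessing $\Gamma'$ VD. The case $w=b$ gives $\Gamma' - b=\Gamma$ VD directly; the case $w=a$ is impossible because $\{b\}$ is then an isolated facet of $\Gamma'-a$ and simultaneously a face of $\text{link}_{\Gamma'}(a)=\text{link}_\Gamma(a)\cup\{\{b\}\}$. For $w\in V(\Gamma)\setminus\{a\}$, write $\Gamma-w$ as a disjoint union of its connected components $C_a\sqcup C_1\sqcup\cdots\sqcup C_k$ where $a\in C_a$; then $\Gamma'-w$ splits as $(C_a+\langle ab\rangle)\sqcup C_1\sqcup\cdots\sqcup C_k$, and by induction applied to the pair $(C_a,a)$ together with the fact that VD distributes over disjoint unions, we conclude $\Gamma-w$ is VD. If $a$ is not isolated in $\Gamma-w$, then $\{a\}$ is not a facet of $\Gamma-w$, so the shedding condition for $w$ transfers from $\Gamma'$ to $\Gamma$, completing the argument. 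The main obstacle is the remaining sub-case where $a$ is isolated in $\Gamma-w$ (so $a$ is a leaf of $\Gamma$ attached to $w$ and $\{a\}$ is a facet of $\Gamma-w$, breaking the shedding property of $w$ in $\Gamma$); there I would instead exhibit $a$ as a shedding vertex of $\Gamma$, observing that $\text{link}_\Gamma(a)=\langle\{w\}\rangle$ is a simplex, and establish that $\Gamma-a$ is VD by showing $w$ is a shedding vertex for $\Gamma-a$ with $\text{link}_{\Gamma-a}(w)=\text{link}_\Gamma(w)\setminus\{\{a\}\}$ (VD by Lemma~\ref{isolated}, since $\{a\}$ is isolated in $\text{link}_\Gamma(w)$) and $(\Gamma-a)-w=C_1\sqcup\cdots\sqcup C_k$ (VD as a disjoint union of VD components).
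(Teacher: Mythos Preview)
Your argument is correct (up to the trivial boundary case $\Gamma=\langle\{a,w\}\rangle$, where $a$ is not actually shedding but $\Gamma$ is already a simplex; you should note this explicitly). The forward direction and the shellable/sequentially CM converse match the paper's proof essentially verbatim.

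For the vertex-decomposable converse, however, your route diverges from the paper's in the delicate sub-case. When the shedding vertex $w$ of $\Gamma'$ lies in $\V(\Gamma)\setminus\{a\}$, you decompose $\Gamma-w$ into connected components and apply the inductive hypothesis only to the component $C_a$ containing $a$; the paper instead applies induction directly to $\Gamma-v$ without isolating components (tacitly relying on the problematic case being absorbed later). More significantly, when $a$ becomes isolated in $\Gamma-w$ so that $w$ fails to be shedding in $\Gamma$, you switch to $a$ as the shedding vertex and then verify by hand that $w$ serves as a shedding vertex for $\Gamma-a$, recycling the already-established vertex decomposability of $\link_{\Gamma'}(w)$ and of the components $C_1,\ldots,C_k$. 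The paper takes a different tack here: it invokes the implication ``vertex decomposable $\Rightarrow$ sequentially CM'' for $\Gamma'-v$, deduces that its pure $1$-skeleton is connected, and concludes that $\Gamma'-v$ consists of isolated vertices together with $\langle ab\rangle$; since $\Gamma$ is connected this forces $\Gamma$ to be a star centred at $v$, which is vertex decomposable by Lemma~\ref{graph ver dec}. Your approach is more self-contained (it never appeals to the sequentially CM hierarchy) and arguably cleaner about the connectedness needed for induction, while the paper's endgame extracts sharper structural information about $\Gamma$ with less bookkeeping.
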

\begin{proof}
(\give): If $\Gamma=\lg a \rg$, then $\Gamma'= \Gamma+ \lg ab \rg$ is a simplex and the result is clear. Assume that
$\Gamma \neq \lg a \rg$, that is, $\{a\}$ is not a facet of $\Gamma$. Then $b$ is a shedding vertex of $\Gamma'$ with
$\link_{\Gamma'} (b)=\lg a \rg$ and $\Gamma'- b= \Gamma$. Hence if $\Gamma$ is vertex decomposable, then $\Gamma'$ is
also vertex decomposable. If $\Gamma$ is shellable, then it can readily be checked that by adding $\{ab\}$ at the end
of a shelling order of $\Gamma$, we get a shelling order of $\Gamma'$. The case for sequentially CM follows from the
facts that $\Gamma^{[i]}=\Gamma'^{[i]}$ for all $i> 1$ and for $i=1$, being CM is equivalent to being connected.

(\rgive):  For sequentially CM, the proof again follows the aforementioned facts. Suppose that $\Gamma'$ is shellable.
In any shelling order of $\Gamma'$, either $\{ab\}$ is the first facet or there is a facet $E$ containing $a$ before
$\{ab\}$. In the former case, the second facet should be a facet $E$ containing $a$. Thus in both cases for each term
$F_i$ after $\{ab\}$ in the shelling order, $F_i \cap \{ab\} \se F_i\cap E$ and it follows that dropping $\{ab\}$ from
a shelling order of $\Gamma'$, gives us a shelling order for $\Gamma$.

Now assume that $\Gamma'$ is vertex decomposable. If $\Gamma'$ is a simplex then we must have $\Gamma=\lg a\rg$ and is
vertex decomposable. So suppose that $v$ is a shedding vertex of $\Gamma'$ such that both $\link_{\Gamma'}(v)$ and
$\Gamma'- v$ are vertex decomposable. If $v=b$, then $\Gamma=\Gamma'- v$ is vertex decomposable. Also $v\neq a$,
because $\{b\}$ is a facet of both $\link_{\Gamma'}(a)$ and $\Gamma'- a$. Thus we assume $v\neq a, b$. Now
$\link_{\Gamma}(v)=\link_{\Gamma'}(v)$ and $\Gamma- v+ \lg ab \rg=\Gamma'- v$. Thus by induction we can deduce that
$\Gamma- v$, as well as $\link_\Gamma(v)$, is vertex decomposable. Therefore, if $v$ is a shedding vertex of $\Gamma-
v$, we are done. Else, a facet $F$ of $\link_{\Gamma}(v)$ is a facet of $\Gamma- v$ but not a facet of $\Gamma - v+\lg
ab \rg$. Thus $F$ must be $\{a\}$ and in $\Gamma'- v$,  $a$ is only connected to $b$. Since $\Gamma'- v$ is vertex
decomposable and hence sequentially CM, its pure 1-skeleton should be CM and connected. This means that $\Gamma' - v$
is just a set of isolated vertices along with $\lg ab \rg$ or equivalently facets of $\Gamma$ are either 0-dimensional
or 1-dimensional containing $v$. Now the result follows from Lemmas \ref{isolated} and \ref{graph ver dec}.
\end{proof}

Recall that a \emph{free vertex} of a simplicial complex is a vertex which is contained in exactly one facet.
\begin{lem}\label{free vertices vdec}
Suppose that $\Gamma$ is a simplicial complex and $a,b$ are two free vertices of $\Gamma$ contained in the facets $E$
and $F$, respectively. Also assume that $F\neq E$ and $|E|, |F|\geq 2$. If $\Gamma$ is vertex decomposable (resp.,
shellable, sequentially CM), then $\Gamma+\lg ab \rg$ is so. Moreover if $|E|, |F|\geq 3$, then the converse also
holds.
\end{lem}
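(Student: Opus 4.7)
I would separate the three properties (shellable, sequentially CM, vertex decomposable) and both directions of each. The starting structural observation, used throughout, is that $\{a,b\}$ is itself a facet of $\Gamma':=\Gamma+\lg ab\rg$: the freeness of $a,b$ in distinct facets forces $b\notin E$ and $a\notin F$, so $\{a,b\}\notin\Gamma$, and any proper superset of $\{a,b\}$ in $\Gamma'$ would have to be a face of $\Gamma$ containing both, which does not exist.

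For shellability, I would append $\{a,b\}$ to any shelling of $\Gamma$ for the forward direction; the one new condition, $\Gamma\cap\lg\{a,b\}\rg=\{\emptyset,\{a\},\{b\}\}$, is pure of dimension $0$. For the converse under $|E|,|F|\geq 3$, $\{a,b\}$ cannot appear first in any shelling of $\Gamma'$ (the second facet would have to intersect it in a pure complex of dimension $\geq 1$, which is impossible), so at its position $i>1$ both $E$ and $F$ must already have come (to deliver $\{a\}$ and $\{b\}$ in the intersection), while later facets are disjoint from $\{a,b\}$ by freeness; excising $\{a,b\}$ then yields a shelling of $\Gamma$. For sequentially Cohen-Macaulayness, $\Gamma^{[i]}=\Gamma'^{[i]}$ for $i\neq 1$, so everything reduces to the pure $1$-skeletons, which differ only by the edge $\{a,b\}$. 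The forward direction is immediate. For the converse, $|E|,|F|\geq 3$ places $a$ and $b$ in $2$-faces, and the connectedness of $\Gamma^{[2]}=\Gamma'^{[2]}$ supplies a path in the $1$-skeleton from $a$ to $b$ avoiding $\{a,b\}$, so $\Gamma^{[1]}$ stays connected.

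For vertex decomposability I would induct on $|\V(\Gamma)|$. In the forward direction, take a shedding vertex $v$ of $\Gamma$. If $v\notin\{a,b\}$ then $v$ remains shedding for $\Gamma'$ because $\{a,b\}\notin\link_\Gamma(v)$, the link coincides with $\link_\Gamma(v)$, and the deletion $(\Gamma-v)+\lg ab\rg$ is vdec by the inductive hypothesis. If $v=a$ (the case $v=b$ being symmetric), then $a$ stays shedding in $\Gamma'$ (the only new face of the link is $\{b\}\subsetneq F\se\Gamma-a$), the link $\lg E-a,\{b\}\rg$ is vdec (shed $b$), and $\Gamma'-a=\Gamma-a$ is vdec by hypothesis. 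The converse runs symmetrically: for $v=a$, the fact that subsets of $E-a$ fail to be facets of $\Gamma-a=\Gamma'-a$ is inherited from $\Gamma'$, giving $a$ shedding in $\Gamma$ with the simplex $\lg E-a\rg$ as link; for $v\notin\{a,b\}$, one inductively descends through $\Gamma'-v=(\Gamma-v)+\lg ab\rg$.

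The most delicate point, which I expect to be the main obstacle, is the sub-case within this induction in which the chosen shedding vertex $v$ lies in $E\cup F$ and reduces one of the relevant facet sizes to the minimum admitted by the inductive hypothesis. For the forward direction, $|E|=2$ with $v\in E$ leaves $a$ isolated in $\Gamma-v$, so a direct application of the inductive lemma is impossible; I would circumvent this by combining \pref{isolated} with \pref{add new ver} to detach $a$ and reattach it to $b$ as if it were a new vertex. For the converse, the corresponding concern is $|E|=3$ or $|F|=3$ producing a facet of size $2$ in $\Gamma-v$, below the threshold of the converse part of this very lemma, so a small ad hoc argument exploiting the explicit structure of $\Gamma-v$ and the fact that the shrunken facet still meets the new edge $\{a,b\}$ only at a single free vertex will be required to close the induction.
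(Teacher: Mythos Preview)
Your treatment of shellability and sequential Cohen--Macaulayness is essentially the same as the paper's. For vertex decomposability your inductive scheme is also the paper's, but you have missed the one observation that makes the induction clean and eliminates your ``most delicate point'' entirely: if $v\neq a,b$ is a shedding vertex (of $\Gamma$ in the forward direction, of $\Gamma'$ in the converse), then necessarily $v\notin E\cup F$. Indeed, suppose $v\in E\sm\{a\}$. Then $E\sm\{v\}$ is a facet of $\link(v)$; by the shedding condition it is \emph{not} a facet of the deletion, so it lies strictly inside some facet $E'$ of the deletion. But $a\in E\sm\{v\}\subsetneq E'$ and $E'$ is a facet of $\Gamma$ (in the forward case) or of $\Gamma'$ (in the converse case; here $|E\sm\{v\}|\geq 2$ and $b\notin E$, so $E'\neq\{a,b\}$ and hence $E'$ is already a facet of $\Gamma$). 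Either way $E'\neq E$ is a facet of $\Gamma$ containing $a$, contradicting the freeness of $a$. The same argument excludes $v\in F$.

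Once you know $v\notin E\cup F$, the facets $E,F$ survive unchanged in $\Gamma-v$, their sizes never drop, and the induction runs without any side cases. Your proposed workarounds (detaching $a$ via \pref{isolated} and reattaching via \pref{add new ver}, and the unspecified ``small ad hoc argument'' for the converse) are patches for a situation that simply does not arise. The converse does require one extra check that the paper makes and you do not mention: after deducing that $\Gamma-v$ is vertex decomposable by induction, you must verify that $v$ is shedding in $\Gamma$ (not just in $\Gamma'$); the paper handles this by noting that a face $D$ of $\link_\Gamma(v)$ which is a facet of $\Gamma-v$ but not of $\Gamma'-v$ would have to satisfy $D\subsetneq\{a,b\}$, which is impossible since $D$ lies strictly inside the facet $E\sm\{v\}=E$ or $F\sm\{v\}=F$ of $\link_\Gamma(v)$.
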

\begin{proof}
(\give): Set $\Gamma'=\Gamma+\lg ab \rg$. We use induction on the number of vertices of $\Gamma$. Suppose that $\Gamma$
is vertex decomposable and $v$ is a shedding vertex of $\Gamma$ with both $\link_\Gamma (v)$ and $\Gamma-v$ vertex
decomposable. If $v=a$, then $\link_{\Gamma'}(v)=\link_{\Gamma}(v) +\lg b \rg$ and $b$ is isolated in this link. Hence
by \pref{isolated} $\link_{\Gamma'}(v)$ is vertex decomposable. Also $\Gamma'-v=\Gamma-v$ and as $b$ is not a facet of
$\Gamma'-v$, $v$ is a shedding vertex of $\Gamma'$ and $\Gamma'$ is vertex decomposable. The case that $v=b$ is
similar. Suppose $v\neq a, b$. If $v\in E$, then $E\sm \{v\}$ is a facet of $\link_{\Gamma}(v)$ and hence there is a
facet $E'$ of $\Gamma-v$, such that $E\sm \{v\} \subsetneq E'$. So $E'\neq E$ is a facet of $\Gamma$ containing $a$,
contradicting freeness of $a$. Consequently, $v\notin E$ and similarly $v\notin F$ and hence the size of the facet
containing $a$ or $b$ does not differ in $\Gamma$ and $\Gamma-v$. Therefore, $\Gamma'- v= (\Gamma- v) +\lg ab \rg$ is
vertex decomposable by induction hypothesis and $\link_{\Gamma'}(v)=\link_{\Gamma}(v)$ is also vertex decomposable.
Clearly $v$ is a shedding vertex of $\Gamma'$ and hence $\Gamma'$ is vertex decomposable. The result for shellability
and being sequentially CM follows an argument similar to the proof of the previous lemma.

(\rgive): Now assume that $|E|, |F|\geq 3$. If $\Gamma'$ is shellable and in any of its shelling orders one of $E$ or
$F$, say $E$, is after $\{ab\}$, then the intersection of $\lg E \rg$ with the previous terms of the order has a facet
$\{a\}$ with $0=\dim \{a\}<\dim E -1$, a contradiction. So $\{ab\}$ appears  after both $E$ and $F$. Now if $F_i$ is a
term after $\{ab\}$, then $\lg F_i \rg \cap \lg ab \rg= \lg F_i \rg \cap \lg E, F\rg$. Hence by deleting the term
$\{ab\}$ from a shelling order of $\Gamma'$, we get a shelling order for $\Gamma$. If $\Gamma'$ is sequentially CM,
then as $\Gamma^{[i]}=\Gamma'^{[i]}$ for $i>1$, we just need to show that $\Gamma^{[1]}$ is CM or equivalently
connected. Because $\Gamma'^{[1]}=\Gamma^{[1]}+ \lg ab \rg$ is connected, it suffices to show that $a$ and $b$ are
connected in $\Gamma$. By our assumption, there are $a\in E_0\se E$ and $b\in F_0 \se F$, with $|E_0|=|F_0|=3$. Thus
there is a strong path between $E_0$ and $F_0$ in $\Gamma^{[2]}=\Gamma'^{[2]}$ and it follows that $a$ and $b$ are
connected in $\Gamma$.

Finally assume that $\Gamma'$ is vertex decomposable and $v$ is a shedding vertex of $\Gamma'$ with both
$\link_{\Gamma'}(v)$ and $\Gamma'-v$ vertex decomposable. If $v=a$, then $\link_{\Gamma'}(v)=\link_{\Gamma}(v) +\lg b
\rg$ and $b$ is isolated in this link. Hence according to \pref{isolated}, $\link_{\Gamma}(v)$ is vertex decomposable.
Also $\Gamma'-v=\Gamma-v$ and as $b$ is not a facet of $\Gamma-v$, $v$ is a shedding vertex of $\Gamma$ and $\Gamma$ is
vertex decomposable. The case that $v=b$ is similar. If $v\neq a, b$, then an argument similar to the proof of (\give),
shows that $v\notin E\cup F$. Thus $\link_\Gamma(v)=\link_{\Gamma'}(v)$ and $\Gamma'- v= \Gamma- v +\lg ab \rg$ is
vertex decomposable by induction. If $D$ is a facet of both $\link_\Gamma(v)$ and $\Gamma-v$, then it is strictly
contained in a facet of $\Gamma'-v$, since $v$ is shedding in $\Gamma'$. Thus we must have $D\subsetneq \{ab\}$ so
$D=\{a\}$ or $\{b\}$. But then $D$ is strictly contained in $E\sm \{v\}$ or $F\sm \{v\}$ which are facets of
$\link_\Gamma(v)$, a contradiction. Consequently, $v$ is a shedding face of $\Gamma$ and $\Gamma$ is vertex
decomposable.
\end{proof}

We should mention that the conditions $|F|, |G| \geq 3$ is necessary for the converse of this lemma, for example,
$\Gamma=\lg av, bu \rg$ is not sequentially CM but $\Gamma+\lg ab \rg$ is vertex decomposable.

Suppose that $v$ is a vertex of $H$ with degree 2 adjacent to vertices $a$ and $b$. By \emph{splitting} $v$, we get the
graph $H'$ with vertex set $(\V(H)\sm \{v\}) \cup \{v_1,v_2\}$, where $v_1$ and $v_2$ are new vertices, and the same
edge set as $H$, where we identify the edges $av$ and $bv$ of $H$ with $av_1$ and $bv_2$ in $H'$. Note that $v_1$ and
$v_2$ which we call \emph{the halves of $v$} are both leaves (vertices of degree 1) in $H'$. Here by a \emph{leaf edge}
we mean an edge incident to a leaf.
\begin{prop}\label{split}
Suppose that $H$ is connected. Then the following are equivalent.
\begin{enumerate}
\item \label{split 1} $\Delta(G)$ is sequentially CM.
\item \label{split 2} If $H'$ is obtained by splitting all vertices of degree 2 of $H$ which are not in a triangle,
    then every connected component of $H'$ is an edge except at most one component whose line graph has a
    sequentially CM clique complex.
\item \label{split 3} $H$ can be obtained by consecutively applying the following two operations on a graph $H_0$ in
    which every vertex of degree two is in a triangle and whose line graph has a sequentially CM clique complex:
    \begin{enumerate}
    \item \label{A} attaching a new leaf to an old leaf of the graph;
    \item \label{B} unifying two leaves whose distance is at least 4.
    \end{enumerate}
\end{enumerate}
Moreover, if any the above statements holds, $H_0$ is as in \pref{split 3} and $\Delta(\L(H_0))$ is vertex decomposable
(resp. shellable), then $\Delta(G)$ is vertex decomposable (resp. shellable).
\end{prop}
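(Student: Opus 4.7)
The plan is to establish the three-way equivalence by proving (2) $\iff$ (3) combinatorially, (1) $\iff$ (2) via a pure-skeleton analysis, and the moreover clause by successively applying Lemmas \pref{isolated}, \pref{add new ver} and \pref{free vertices vdec}.

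For the combinatorial equivalence (2) $\iff$ (3), I would observe that operations \pref{A} and \pref{B} are the only two ways to introduce a degree-2 non-triangle vertex into a graph. Attaching a new leaf $\ell'$ to an old leaf $\ell$ makes $\ell$ degree 2, with neighbors (the original neighbor of $\ell$ together with $\ell'$) that cannot be adjacent because $\ell'$ has no other edges; so $\ell$ is not in a triangle. Unifying two leaves at distance $\geq 4$ forces their attachment points to be at graph distance $\geq 2$, i.e.\ non-adjacent, so the identified vertex is again not in a triangle. Conversely, splitting a degree-2 non-triangle vertex $v$ reverses exactly one of these moves depending on whether one of its halves ends up in a $K_2$ component (reversing \pref{A}) or remains in the main component (reversing \pref{B}). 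Iterating reduces $H$ to the desired $H_0$.

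For (1) $\iff$ (2), the key observation is that every facet of $\Delta=\Delta(\L(H))$ of size $\geq 3$ arises either from a triangle of $H$ or from the star at a vertex of $H$ of degree $\geq 3$, and splitting a degree-2 non-triangle vertex of $H$ merely deletes a single $2$-element facet of $\Delta$ without disturbing any of these larger facets. Consequently $\Delta(\L(H))^{[i]}=\Delta(\L(H'))^{[i]}$ for every $i\geq 2$, while $\Delta^{[0]}$ is automatically Cohen-Macaulay and $\Delta^{[1]}=\L(H)$ is connected and hence CM because $H$ is connected with at least two edges. Therefore $\Delta(\L(H))$ is sequentially CM if and only if $\Delta(\L(H'))^{[i]}$ is CM for every $i\geq 2$. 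Now any non-$K_2$ component of $H'$ must contain a triangle or a vertex of degree $\geq 3$ (since after splitting, the only degree-2 vertices that remain lie in triangles), so it contributes a non-empty piece to $\Delta(\L(H'))^{[2]}$; two distinct non-$K_2$ components therefore yield disconnected pieces of $\Delta(\L(H'))^{[2]}$, preventing it from being CM. Conversely, if $H'$ has at most one non-$K_2$ component $H_0$, the remaining $K_2$-components only contribute isolated vertices that disappear from the $i\geq 1$ skeletons, so $\Delta(\L(H'))^{[i]}=\Delta(\L(H_0))^{[i]}$ for every $i\geq 2$; combined with the automatic Cohen-Macaulayness of $\Delta(\L(H_0))^{[0]}$ and $\Delta(\L(H_0))^{[1]}$, this makes $\Delta(\L(H))$ sequentially CM exactly when $\Delta(\L(H_0))$ is.

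For the moreover clause I would build $\Delta(\L(H))$ up from $\Delta(\L(H_0))$ by reversing the sequence of splittings, which applies operations \pref{A} and \pref{B} in turn. Each application of \pref{A} corresponds in the clique complex to attaching a brand new vertex to an existing vertex by a single edge, so Lemma \pref{add new ver} preserves vertex decomposability and shellability. Each application of \pref{B} inserts an edge between two free vertices of $\Delta$ whose facets have size $\geq 2$, so the forward half of Lemma \pref{free vertices vdec} again preserves both properties. Lemma \pref{isolated} handles the isolated $\L$-vertices contributed by $K_2$ components. The chief subtlety throughout is the invariance claim that splitting never disturbs any facet of size $\geq 3$ of $\Delta(\L(H))$; verifying it amounts to a direct case analysis of how triangles of $\L(H)$ arise, namely from triangles of $H$ or from stars at vertices of $H$ of degree $\geq 3$, neither of which involves a degree-2 non-triangle vertex in an essential way.
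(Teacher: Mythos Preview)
Your argument is correct and follows essentially the same approach as the paper. The key ingredients are identical: the invariance $\Delta(\L(H))^{[i]}=\Delta(\L(H'))^{[i]}$ for $i\geq 2$ (since splitting a degree-2 non-triangle vertex removes only the single 2-element facet $\{va,vb\}$), the observation that every non-$K_2$ component of $H'$ contributes to $\Delta^{[2]}$ so that two such components force $\Delta^{[2]}$ to be disconnected, and the use of Lemmas \ref{add new ver} and \ref{free vertices vdec} to push vertex decomposability/shellability across operations \pref{A} and \pref{B}.

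Two small remarks. First, your invocation of Lemma \ref{isolated} in the moreover clause is superfluous: when you build $H$ from $H_0$ by operations \pref{A} and \pref{B}, every intermediate graph is connected, so no isolated vertices appear in the line-graph clique complex at any stage (the $K_2$ components live only in the \emph{split} graph $H'$, not in the build-up). Second, when applying Lemma \ref{free vertices vdec} at an intermediate stage you need $|E|,|F|\ge 2$; this holds because the intermediate graph is connected with at least five vertices (two leaves at distance $\ge 4$), so the non-leaf endpoint of each leaf edge has degree $\ge 2$. The paper notes but does not fully spell this out either. Organizationally, your direct proof of \pref{split 2} $\Rightarrow$ \pref{split 1} via the skeleton invariance is marginally more economical than the paper's route through \pref{split 3}, but the content is the same.
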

\begin{proof}
\pref{split 1} \give\ \pref{split 2}: We assume that $H$ and $H'$ have the same set of edges, according to the remarks
before the proposition. If $\dim \Delta = 1$,  then $H$ is a cycle or a path and hence either every component of $H'$
is an edge or $H'$ is a triangle. Assume that $\dim \Delta>1$. Suppose that $F_1$ and $F_2$ are adjacent facets of
$\Delta^{[2]}$. Then one of $F_1$ and $F_2$, say $F_1$ is a triangle and the other one, $F_2$, is a 3-star in $H$ with
two vertices (esp. the center of the star) on $F_1$. Note that the vertices of the triangle are not split in $H'$ and
hence $F_1$ and $F_2$ are on the same component of $H'$. As $\Delta^{[2]}$ is CM and hence strongly connected, it
follows that all triangles and all 3-stars of $H$ are in one component of $H'$, call it $H_0$. Noting that every vertex
with degree $r\geq 3$ is the center of $\binom{r}{3}$ 3-stars, we see that all vertices of degree $\geq 3$ of $H$ and
also all triangles of $H$ are on $H_0$. Hence $\Delta(\L(H_0))^{[i]}=\Delta^{[i]}$ for each $i\geq 2$ and so
$\Delta(\L(H_0))$ is sequentially CM. Since all vertices of $H'$ not on $H_0$ have degree at most 2 and $H'$ has no
vertex of degree 2 not on a triangle, all other components of $H'$ are edges.

\pref{split 2} \give\ \pref{split 3}: If every connected component of $H'$ is an edge, then $H$ is a path or a cycle
with length at least 4 and can be constructed by applying \pref{A} and \pref{B} starting with any of the connected
components of $H'$. Else let $H_0$ be the component of $H'$ which is not an edge. Then $\Delta(\L(H_0))$ is
sequentially CM and $H_0$ has no vertex of degree 2 not in a triangle (because all such vertices have been split
before). Thus we can construct $H$ from $H_0$ by ``undoing'' the vertex splits one by one. Note that when splitting
vertices of degree 2 not in a triangle, the order of the vertices to split does not make any difference on the final
graph. So we can assume that one half of the last vertex $v$ which is split is on $H_0$. If both $v_1$ and $v_2$, the
halves of $v$, are on $H_0$, then unifying $v_1$ and $v_2$ which are leaves, makes the last split undone. Note that as
$v$ was not in a triangle when it got split, the distance of $v_1$ and $v_2$ is at least 4. If only one half of $v$,
say $v_1$ is on $H_0$, then the other half is on an edge component of $H'$. Thus undoing the last split in this case is
indeed attaching a leaf to $v_1$. Hence ``undoing'' each vertex split is indeed applying \pref{A} or \pref{B}.

\pref{split 3} \give\ \pref{split 1} and the ``moreover'' statement: Let $G_0=\L(H_0)$ and $\Delta_0=\Delta(G_0)$. Note
that each leaf edge of $H_0$ is a free vertex of $\Delta_0$. Let $v_1 , v_2$ be leaves of $H_0$, $e_1,e_2$ be leaf
edges containing them and $E_1$ and $E_2$ be facets of $\Delta_0$ containing $e_1$ and $e_2$, respectively. If $v_1$
and $v_2$ have distance at least 4, then $e_1$ and $e_2$ neither are adjacent nor have a common neighbor in $G_0$ and
hence $E_1\neq E_2$. Also note that $|E_i|=1$ \ifof $G_0$ is just one edge. Since the operations mentioned in
\pref{split 3}, just add a new facet $\{ee'\}$ to $\Delta_0$ (for \pref{A}, $e$ is a free vertex of $\Delta_0$ and $e'$
a new vertex, and for \pref{B}, both of $e$ and $e'$ are free vertices of $\Delta_0$), the result follows from Lemmas
\ref{add new ver} and \ref{free vertices vdec}.
\end{proof}

An instant consequence of \pref{split} is the following characterization of graphs with no vertex of degree $\geq 4$
whose line graphs have sequentially CM clique complexes.
\begin{cor}\label{no deg>4}
Suppose that $H$ is a graph with at least one edge and $G=\L(H)$. If $\deg v\leq 3$ for each vertex $v$ of $H$, then
the following are equivalent.
\begin{enumerate}
\item $\Delta(G)$ is vertex decomposable.

\item $\Delta(G)$ is shellable.

\item $\Delta(G)$ is sequentially CM (over some field).

\item $H$ can be constructed from a star or a path or a cycle or a graph in Figure \ref{fig1}, by consecutively
    applying \pref{A} and \pref{B} of \pref{split}\pref{split 3}.
\end{enumerate}
\end{cor}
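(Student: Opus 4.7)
The plan is to prove the cycle (i) $\Rightarrow$ (ii) $\Rightarrow$ (iii) $\Rightarrow$ (iv) $\Rightarrow$ (i). The first two implications are completely standard (vertex decomposable implies shellable implies sequentially CM), so all the real content sits in the last two implications.

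For (iii) $\Rightarrow$ (iv), I will invoke the implication \pref{split 1} $\Rightarrow$ \pref{split 3} of Proposition \ref{split} to extract a graph $H_0$ whose degree-$2$ vertices all lie on a triangle, whose line-graph clique complex is sequentially CM, and from which $H$ is recovered by iterating \pref{A} and \pref{B}. The key observation is that splitting a degree-$2$ vertex replaces it by two degree-$1$ vertices without altering the degree of any other vertex; hence the bound $\deg v \leq 3$ descends from $H$ to $H_0$. Consequently $H_0$ satisfies case \pref{Del pure 2} (if some vertex has degree $3$) or case \pref{Del pure 3} of Lemma \ref{Del pure}, so $\Delta(\L(H_0))$ is pure. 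A pure sequentially CM complex is CM, and Theorem \ref{Del main} then pins $H_0$ down as a star (necessarily with at most three edges, due to the degree bound), a path, a cycle, or one of the graphs in Figure \ref{fig1}. This is precisely (iv).

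For (iv) $\Rightarrow$ (i), I will apply Theorem \ref{Del main} to the initial graph $H_0$ to conclude that $\Delta(\L(H_0))$ is pure vertex decomposable, and then iterate the ``moreover'' clause of Proposition \ref{split}: each application of \pref{A} or \pref{B} preserves vertex decomposability of the line-graph clique complex via Lemmas \ref{add new ver} and \ref{free vertices vdec}. A small point worth flagging is that (iv) permits $H_0$ to be a path or a cycle, which does not literally satisfy the triangle hypothesis of \pref{split 3}; however the proof of \pref{split 3} $\Rightarrow$ \pref{split 1} uses only that the leaf edges of $H_0$ are free vertices of $\Delta(\L(H_0))$, and this remains true for paths (and is vacuous for cycles, which admit no applications of \pref{A} or \pref{B} at all), so the inductive argument still goes through.

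The main obstacle is essentially bookkeeping: checking that the degree bound transfers cleanly from $H$ to $H_0$, and verifying that under this restriction the ``star'' option in Theorem \ref{Del main} contributes only stars with at most three edges. Once these are confirmed, the corollary reduces to a clean orchestration of Proposition \ref{split}, Theorem \ref{Del main}, and Lemma \ref{Del pure}.
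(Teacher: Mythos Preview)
Your proposal is correct and follows essentially the same route as the paper's proof: reduce via Proposition~\ref{split} to a graph $H_0$ with no ``bad'' degree-$2$ vertices, observe that the degree bound forces $\Delta(\L(H_0))$ to be pure so that sequentially CM collapses to CM, and then invoke Theorem~\ref{Del main}. Your write-up is in fact more careful than the paper's terse argument --- you explicitly cite Lemma~\ref{Del pure} for purity (the paper's one-line justification ``$\Delta(H_0)^{[i]}=\emptyset$ for $i>2$, hence $\Delta(H_0)$ is pure'' does not literally follow), and you flag and resolve the minor wrinkle that a path or cycle $H_0$ does not satisfy the triangle hypothesis of \pref{split}\pref{split 3} verbatim.
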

\begin{proof}
Let $H_0$ be a subgraph of $H$ in which every vertex of degree 2 is in a triangle. Then $\Delta(H_0)^{[i]}=\tohi$ for
$i>2$. Hence $\Delta(H_0)$ is pure and being sequentially CM is equivalent to being CM for $\Delta(H_0)$. Consequently,
the result follows by \pref{Del main} and \pref{split}.
\end{proof}


In the sequel, unless stated otherwise explicitly, we assume that $H_0$ is a connected graph with exactly one vertex
$v$ with degree $r>3$ and also suppose that every vertex of degree 2 in $H_0$ is in a triangle. We also let
$G_0=\L(H_0)$ and $\Delta_0=\Delta(G_0)$. According to \pref{split} and its corollary, by characterizing  those $H_0$
for which $\Delta_0$ is sequentially CM, we can derive a characterization of all graphs whose line graphs have a
sequentially CM clique complex. Noting that for $i>2$, $\Delta_0^{[i]}$ is either empty or the pure $i$-skeleton of a
simplex and for $i<2$, $\Delta_0^{[i]}$ is CM since $\Delta_0$ is connected, we just need to see when $\Delta_0^{[2]}$
is CM. First we study when $\Delta_0^{[2]}$ is strongly connected.

Suppose that $l_0=\{v\}$ and define $L_i=\N_{H_0}(L_{i-1})\sm (\cup_{j=0}^{i-1} L_j)$ to be the set of vertices of
\emph{level $i$} in $H_0$. Here $\N_{H_0}(A)$ is the set of all vertices adjacent to a vertex in $A$ inside the graph
$H_0$. Thus indeed, the level of a vertex is its distance to $v$. Note that a vertex with level $i$ can be adjacent
only to vertices with levels $i-1, i, i+1$. Suppose that $H_0[L_i]$ is the induced subgraph of $H_0$ on the vertex set
$L_i$. Then if $H'=H[L_1]$, every $u\in L_1$ has degree at most 2 in $H'$, since it is also adjacent to $v$ in $H_0$.
Therefore each connected component of $H'$ is either an isolated vertex or a cycle or a path of length $\geq 1$. We
call these isolated vertices, cycles and paths with positive lengths of $H[L_1]$, the \emph{level 1 isolated vertices},
\emph{level 1 cycles} and \emph{level 1 paths}, respectively.

\begin{lem}\label{lem st con}
If $\Delta_0^{[2]}$ is strongly connected, then every vertex $x$ with level $\geq 2$ and $\deg(x)=3$, has level 2 and
is adjacent to both endpoints of a level 1 path with length 1.
\end{lem}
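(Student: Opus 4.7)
The plan is to examine a shortest strong path in $\Delta_0^{[2]}$ from $F_x$, the unique 3-star at $x$, to some 3-star at $v$. Both are facets of $\Delta_0^{[2]}$: $F_x$ because $\deg(x)=3$ makes the 3-clique of edges at $x$ maximal among 3-cliques of $G_0$, and a 3-star at $v$ because $\deg(v)=r>3$. Two facets of $\Delta_0^{[2]}$ are adjacent iff they share two edges of $H_0$, and since distinct 3-stars at different vertices share at most one edge, two distinct 3-stars at the same vertex force the vertex to have degree $\geq 4$ (hence to be $v$), and two distinct triangles share at most one edge, the only facets adjacent to $F_x$ are triangles of $H_0$ through $x$. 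Strong connectivity, combined with the existence of 3-stars at $v$, therefore forces $x$ to lie in some triangle of $H_0$.

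I would then use the strong path to induce a walk $x=u_0,u_1,\ldots,u_k=v$ in $H_0$ in which consecutive $u_{j-1},u_j$ are joined via a triangle $T_j=u_{j-1}u_jz_j$ for a common neighbor $z_j$. Choosing the path of shortest length, we may assume $u_j\neq v$ for all $0<j<k$ (otherwise truncate the walk at the first occurrence of $v$), so each intermediate $u_j$ has degree exactly $3$ and its 3-star facet is unique. This facet must contain the four edges $u_ju_{j-1}$, $u_jz_j$, $u_ju_{j+1}$, $u_jz_{j+1}$, yet has only three edges, so exactly one of the following coincidences occurs:
\begin{enumerate}
\item $u_{j-1}=u_{j+1}$;
\item $u_{j-1}=z_{j+1}$;
\item $z_j=u_{j+1}$;
\item $z_j=z_{j+1}$.
\end{enumerate}
The crux will be to show that in a shortest walk only case (iv) arises: case (i) lets us truncate a loop at $u_{j-1}$; in case (ii) the vertices $u_{j-1},u_j,u_{j+1}$ form a triangle, and since $\deg(u_{j-1})=3$ its unique 3-star also contains $u_{j-1}u_{j+1}$, so the sub-path around $u_j$ can be replaced by that triangle for a strictly shorter strong path; case (iii) is symmetric.

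Given that (iv) holds at every intermediate vertex, the $z_j$'s collapse to a single vertex $z$ adjacent to each of $u_0=x,u_1,\ldots,u_k=v$. In particular $z$ is adjacent to $v$, so $z\in L_1$ and $z\neq v$; hence $\deg(z)\leq 3$. The bound $k+1\leq\deg(z)\leq 3$, combined with $k\geq\mathrm{level}(x)\geq 2$ (since the walk length is at least the distance from $x$ to $v$ in $H_0$), forces $k=2$ and $\mathrm{level}(x)=2$. For the length-$2$ walk $x,u_1,v$ with triangles $xu_1z$ and $u_1vz$, the vertex $u_1$ lies in $L_1$ (as a neighbor of $v$), $z$ lies in $L_1$ (as shown), and $u_1z$ is an edge of $H_0$ coming from the first triangle; so $u_1z$ is a path of length $1$ in $H_0[L_1]$ whose both endpoints are adjacent to $x$, as required. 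The main obstacle is to rigorously verify that the shortcuts in cases (i)--(iii) do yield valid and strictly shorter strong paths; with that in hand, the single neighbor count at $z$ delivers both halves of the conclusion simultaneously.
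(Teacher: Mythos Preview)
Your argument is sound and reaches the conclusion by a route different from the paper's. Both start from a shortest strong path from the 3-star at $x$ to a 3-star at $v$; the paper then looks at the smallest index $t$ for which some edge of $F_t$ is incident to $v$ and, by a short local analysis of $F_{t-2},F_{t-1},F_t,F_{t+1}$, shows directly that $F_{t-2}$ must already be the 3-star at $x$. You instead parameterize the entire alternating path as a walk $u_0,\ldots,u_k$ with a common triangle-apex $z$ and let the degree bound $\deg(z)\le 3$ (together with the distinctness of the $u_j$, which follows since the $S_j$ are distinct facets of a shortest path) force $k=2$ --- a more global and arguably cleaner argument. Two small points to tighten: the ``symmetric'' dismissal of case~(iii) is not literally valid at $j=k-1$, since there $u_{j+1}=v$ and the particular 3-star $S_k$ need not contain the edge $vu_{k-2}$; but this is harmless because the triangle $T_{k-1}=u_{k-2}u_{k-1}v$ is adjacent to \emph{some} 3-star at $v$ (as $r>3$), still yielding a strictly shorter path to a 3-star at $v$. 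And at the end you should observe that $u_1$ and $z$ each have exactly the three neighbors $x,v$, and the other, so the edge $u_1z$ is an entire connected component of $H_0[L_1]$ --- this is what ``level~1 path of length~1'' means in the paper's terminology, not merely an edge between two level~1 vertices. (The paper's own proof is equally tacit on this last verification.)
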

\begin{proof}
Let $F$ be the 3-star at $x$ and $F'$ be a 3-star at $v$. Then as $\Delta_0^{[2]}$ is strongly connected, there exist a
strong path $F=F_1, \ldots, F_k=F'$ of 3-stars and triangles of $H_0$. We suppose that this strong path is the smallest
possible and in particular, there is no repetition in the path. Assume that $t$ is the smallest index such that $v$ is
incident to an edge in $F_t$. Thus $F_t$ is a 3-star at a level 1 vertex $a$ and $F_t=\{av, ab,ac\}$. Since $v$ is not
on any edge of $F_{t-1}$, $F_{t-1}$ must be the triangle $\{ab,ac,bc\}$ and $F_{t-2}$ is a 3-star at $b$ or $c$. As
each 3-star at a level 1 vertex has an edge incident to $v$, the center of $F_{t-2}$ has level 2. Also since $F_{t+1}$
is a triangle and $|F_{t+1}\cap F_t|=2$ is $F_{t+1}$ is the edge set of either the triangle $vab$ or the triangle
$vac$. In particular, at least one of $b,c$ has level 1. So we can suppose that $b\in L_1$ and $c\in L_2$ and
$F_{t-2}=\{cb,ca,cd\}$ is the 3-star at $c$. If $F_{t-2}\neq F$, then $t\geq 4$ and $F_{t-3}$ must be either
$\{cb,cd,db\}$ or $\{ca,cd, da\}$ for a vertex $d\notin \{a,b,c,v\}$. But then $a$ or $b$ has degree $>3$, a
contradiction. Thus $F_{t-2}=F$ and hence $x=c$ has level 2. Moreover, $x$ is adjacent to $a,b$ which have level 1 and
also are adjacent to each other. So $ab$ is a level 1 path with length 1, as required.
\end{proof}

\begin{prop}\label{st conn Del2}
The complex $\Delta_0^{[2]}$ is strongly connected, \ifof $H_0$ satisfies both of the following conditions (see an
example in Figure \ref{fig6}).
\begin{enumerate}
\item \label{st conn Del2 1} Every level 3 vertex of $H_0$ is a leaf.
\item \label{st conn Del2 2} A level 2 vertex $x$ of $H_0$ satisfies one of the following:
    \begin{enumerate}
    \item \label{st conn Del2 21} $x$ is a leaf adjacent to an endpoint of a level 1 path;
    \item \label{st conn Del2 22} $\deg(x)=2$ and $x$ is adjacent to both endpoints of a level 1 path with length
        1;
    \item \label{st conn Del2 23} $\deg(x)=3$ and $x$ is adjacent to both endpoints of a level 1 path with length 1
        and the other neighbor of $x$ is either a level 3 vertex or a level 2 vertex with degree 3 or the endpoint
        of a level 1 path.
    \end{enumerate}
\end{enumerate}
\end{prop}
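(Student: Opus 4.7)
\emph{Necessity.} Assume $\Delta_0^{[2]}$ is strongly connected. For (i), suppose $x$ is a non-leaf level-3 vertex. Since $v$ is the unique vertex with $\deg>3$, $\deg(x)\in\{2,3\}$, and \pref{lem st con} excludes $\deg(x)=3$. So $\deg(x)=2$ and the standing hypothesis puts $x$ in some triangle $T=\{x,y,z\}$. Any facet adjacent to $T$ must be a 3-star at $x$, $y$, or $z$ using the two edges of $T$ at that vertex; no such star exists at $x$, and applying \pref{lem st con} to $y$ or $z$ would simultaneously demand two level-1 endpoints as neighbors \emph{and} the remaining two vertices of $T$, forcing $\deg>3$. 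Hence $T$ is isolated in the facet-adjacency graph, contradicting strong connectedness. For (ii), fix $x\in L_2$ and split on $\deg(x)$: the case $\deg(x)=3$ follows from \pref{lem st con} together with a strong-path chase back toward a fixed 3-star $F^*$ at $v$ through the unique 3-star $\{xa,xb,xc\}$ at $x$ and the requirement that $c$ itself obey (ii), pinning down the three alternatives in (ii)(c); the case $\deg(x)=2$ mirrors the proof of (i) and forces $x$ together with the two endpoints of a level-1 length-1 path to be a triangle, giving (ii)(b); the case $\deg(x)=1$ requires the edge $xu$ to belong to some 3-star at $u\in L_1$, and tracing that star's strong path to $F^*$ forces $u$ to be an endpoint of a level-1 path, giving (ii)(a).

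\emph{Sufficiency.} Fix a 3-star $F^*$ at $v$. Since $\deg(v)=r\geq 4$, any two 3-stars at $v$ are connected by a chain of single-edge swaps, each swap giving an adjacent pair of facets. Each triangle $\{v,a,b\}$ with $a,b\in L_1$ adjacent shares two edges with some 3-star at $v$. Any 3-star at a level-1 vertex $u$ with $\deg u\geq 3$ is connected, via edge swaps among the 3-stars at $u$, to one containing $uv$, which then sits two edges away from a triangle $\{v,u,w\}$ with $w\in L_1$ adjacent to $u$. Finally, every facet based at $L_2$ permitted by (ii)(a)–(c) is adjacent to the triangle $\{x,a,b\}$ with $a,b\in L_1$ adjacent (the level-1 length-1 path hypothesized in (ii)), and this triangle then shares two edges with a 3-star at $a$ containing $av$ and $ab$, completing a strong path to $F^*$.

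\emph{Main obstacle.} The delicate point is the third-neighbor clause in (ii)(c). Several facets involving the triple $\{x,a,b\}$ and the edge $xc$ must each be rescued into a strong path to $F^*$—the 3-star $\{xa,xb,xc\}$ at $x$, any 3-star at $c$ through $xc$, and any triangle $\{x,c,w\}$—and the three alternatives for $c$ listed in (ii)(c) are precisely those admitting such paths. Checking the mutual consistency of (ii)(a)–(c) across \emph{all} level-2 vertices, and ruling out spurious configurations such as a level-2 leaf attached to another level-2 vertex, will require the most careful case analysis.
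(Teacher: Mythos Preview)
Your overall strategy matches the paper's---necessity by case analysis using \pref{lem st con} and the facet-adjacency structure, sufficiency by building strong paths back to a 3-star at $v$---but several steps are not yet arguments.

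For necessity, the paper's key preliminary (which you skip) is to show that every \emph{level-1 isolated vertex} has degree $1$. This single fact yields immediately that the only level-1 vertices with a level-2 neighbour are endpoints of level-1 paths, and that each such endpoint has at most one level-2 neighbour; the paper then reads off (ii)(a)--(c) and finally (i) from this. Without it your case (ii)(b) does \emph{not} ``mirror the proof of (i)'': in (i) both triangle-neighbours of $x$ lie at level $\geq 2$, so no 3-star at either can satisfy \pref{lem st con}, and $T$ is isolated; but for a level-2 vertex $x$ of degree $2$, one neighbour $y$ in the triangle $T$ is level~1, and the 3-star at $y$ \emph{is} adjacent to $T$. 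You need a further step showing this 3-star is itself a dead end, and that is exactly where the level-1-isolated-vertex fact enters. Your case (ii)(c) invokes ``the requirement that $c$ itself obey (ii)'', which as written is circular (and in your ordering the $\deg=3$ case comes first, so (ii)(a),(b) are not yet available); the paper instead argues directly that if $c\in L_2$ then $\deg(c)\geq 2$ (it has a level-1 neighbour and also $x$) while $\deg(c)=2$ would force both neighbours into $L_1$, whence $\deg(c)=3$.

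For sufficiency, when you say a 3-star at a level-1 vertex $u$ with $\deg u=3$ ``sits two edges away from a triangle $\{v,u,w\}$ with $w\in L_1$'', you are assuming such a $w$ exists, i.e.\ that $u$ is not a level-1 isolated vertex. This must be \emph{derived} from conditions (i)--(ii); the paper does it in one line (otherwise both non-$v$ neighbours of $u$ lie in $L_2$, and (ii) forbids a level-2 vertex from being adjacent to a level-1 isolated vertex). The phrase ``edge swaps among the 3-stars at $u$'' is vacuous here since $\deg u=3$ gives a unique 3-star. Finally, triangles lying entirely in $L_1$ (level-1 3-cycles) are not covered by your ``facets based at $L_2$'' clause; they are easily handled via the 3-star at any of their vertices, but should be mentioned. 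Your ``Main obstacle'' paragraph also contains a non-configuration: a level-2 leaf cannot be attached to a level-2 vertex, since by definition its unique neighbour lies in $L_1$.
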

\begin{figure}
\includegraphics{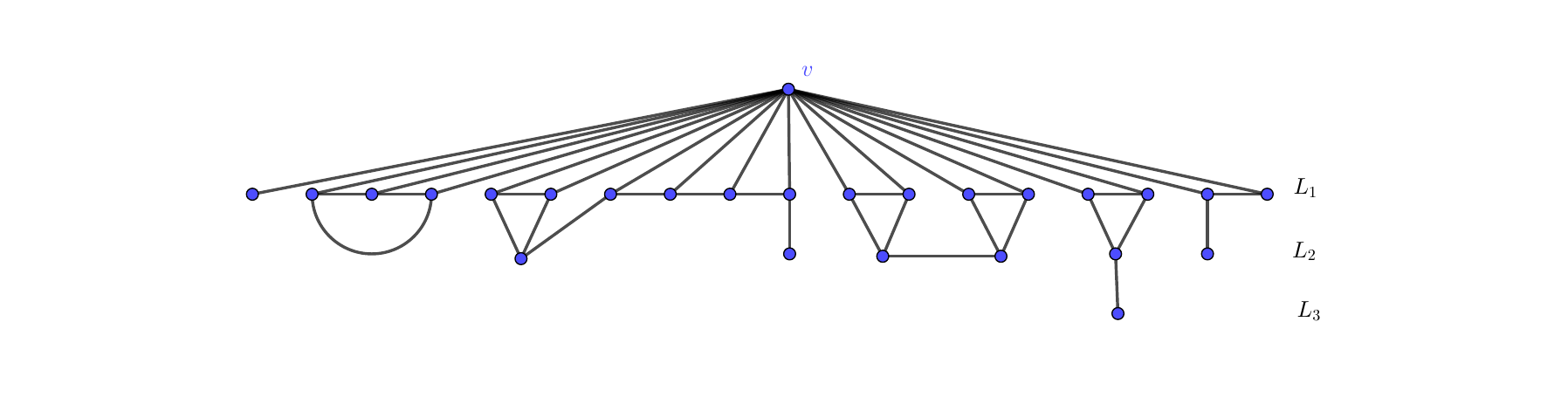}
\caption{An example of $H_0$ satisfying conditions of \pref{st conn Del2}\label{fig6}}
\end{figure}
\begin{proof}
(\give): Assume that $y$ is a level 1 isolated vertex. If $\deg(y)=2$, then $y$ is in a triangle $yva$ and hence $y$ is
adjacent to the level 1 vertex $a$, a contradiction. If $\deg(y)=3$ and $y$ is adjacent to 2 level 2 vertices $a,b$,
then there must exist a strong path $F_1, \ldots, F_k$  starting with the 3-star at $y$ and ending with a 3-star at
$v$. Then $F_2$ must be a triangle containing two edges of $F_1$, hence $F_2=\{ya, yb, ab\}$ and $F_3$ is the 3-star at
one of $a$ or $b$, say $a$. Then the level 2 vertex $a$ has degree 3 but is not adjacent to the endpoints of a level 1
path, contradicting \pref{lem st con}. Consequently, $\deg(y)=1$. So the only level 1 vertices that can be adjacent to
a level 2 vertex are the endpoints of level 1 paths and since each such vertex has two neighbors in $L_1\cup L_0$, each
can have at most one neighbor in $L_2$.

Now assume that $x$ is a level 2 vertex, according to the above argument, $x$ is adjacent to an endpoint $a$ of a level
1 path. If $\deg(x)=1$, then $x$ is a leaf and case \pref{st conn Del2 2}\pref{st conn Del2 21} holds. If $\deg(x)=2$,
then $x$ is in a triangle $axb$. If $b$ is not in $L_1$, then $a$ has two neighbors in $L_2$, contradicting the above
remarks. Thus $b$ has level 1 and is adjacent to $a$, that is $a,b$ are the endpoints of a level 1 path with length 1
as in \pref{st conn Del2 2}\pref{st conn Del2 22}. If $\deg(x)=3$, then two of the neighbors of $x$ are endpoints of a
level 1 path with length 1 according to \pref{lem st con}. The other neighbor $y$ has either level 1  and must be an
endpoint of a  level 1 path by the above argument, or has level 3, or has level 2. In the last case, since $y$ is
adjacent to $x$ and a level 1 vertex, $\deg(y)\geq 2$ and because both vertices adjacent to a level 2 vertex with
degree 2 lie in $L_1$, $\deg(y)\neq 2$. Hence if the third neighbor $y$ of $x$ has level 2, then $\deg(y)=3$, as
required.

Finally suppose that $x$ has level 3. According to \pref{lem st con}, $\deg(x)\leq 2$. If $\deg(x)=2$, then $x$ is in a
triangle containing a level 2 vertex $y$. But then $y$ has two neighbors in $L_2 \cup L_3$ which contradicts part
\pref{st conn Del2 2} proved above. Thus $\deg(x)=1$.

 (\rgive): First note that these conditions ensure that the only level 1 vertices with a neighbor in $L_2$ are
endpoints of level 1 paths and $\cup_{i=4}^\infty L_i=\tohi$. To prove that $\Delta_0^{[2]}$ is strongly connected, it
suffices to show that from every 3-star or triangle $F$ of $H_0$, there is a strong path $F=F_0, F_1, \ldots , F_t$ in
$\Delta_0^{[2]}$, where $F_t$ is a 3-star at $v$, because there exists strong paths between any two 3-stars at $v$.

Clearly every triangle containing $v$ is adjacent to a 3-star at $v$. Also a level 1 vertex $a$ with $\deg(a)=3$, is
adjacent to at least another level 1 vertex. Else $a$ is a level 1 isolated vertex and is not adjacent to any level 2
vertex by condition \pref{st conn Del2 2}, which means $\deg(a)=1$. Therefore the 3-star at $a$ is adjacent to a
triangle containing $v$. Now assume that $F$ is a triangle on which $v$ does not lie. Since every level 3 vertex is a
leaf and at most one of the neighbors of a level 2 vertex has level $\neq 1$, there must be at least one level 1 vertex
$a$ on $F$. Therefore, $F$ is adjacent to the 3-star at $a$. Finally, any 3-star with center not on $L_0\cup L_1$, must
be centered at a level 2 vertex. Then by \pref{st conn Del2 2}\pref{st conn Del2 23}, this 3-star is adjacent to a
triangle containing the two endpoints of a level 1 path of length 1. This shows that starting from any 2-face of
$\Delta_0$ and by passing from adjacent 2-faces, we can reach a 3-star at $v$.
\end{proof}


For simplicity, in the following definition we give a name to the graphs of the form $H_0$ satisfying the conditions of
the previous result. To consider the cases that the maximum degree of $H$ is at most 3, we state the definition a
little bit more general.

\begin{defn}
Suppose that $C$ is a graph, $v$ is a vertex of $C$ and $r$ is a positive integer. We say that $C$ is an $r$-graph
rooted at $v$ or simply an $r$-graph, if $C$ is connected, $\deg(v)=r$, all other vertices of $C$ have degree at most
$\min\{r,3\}$, all vertices of $C$ with degree 2 are in some triangles and also $C$ satisfies the conditions of
\pref{st conn Del2}, where the level of a vertex of $C$ is defined by $L_0=\{v\}$ and $L_i=\N(L_{i-1})\sm
(\cup_{j=0}^{i-1} L_j$).
\end{defn}

Thus \pref{st conn Del2} states that $\Delta_0^{[2]}$ is strongly connected \ifof $H_0$ is an $r$-graph for some $r>3$.

To find out when $\Delta_0^{[2]}$ is CM, we need an algebraic tool. Let $\Gamma$ be a simplicial complex and denote by
$\tl{C}_d(\Gamma)= \tl{C}_d(\Gamma; K)$ the free $K$-module whose basis is the set of all $d$-dimensional faces of
$\Gamma$. Consider the $K$-map $\rnd_d: \tl{C}_d(\Gamma) \to \tl{C}_{d-1}(\Gamma)$ defined by
 $$\rnd_d(\{v_0, \ldots, v_d\}) =\sum_{i=0}^d (-1)^i \{v_0, \ldots, v_{i-1},v_{i+1},\ldots, v_d\},$$
where $v_0<\cdots <v_d$ is a linear order. Then $(\tl{C}_\blt, \rnd_\blt)$ is a complex of free $K$-modules and
$K$-homomorphisms called the \emph{augmented oriented chain complex} of $\Gamma$ over $K$. We denote the $i$-th
homology of this complex by $\tl{H}_i(\Gamma; K)$. The Reisner theorem states that  $\Gamma$ is CM over $K$, \ifof  for
all faces $F$ of $\Gamma$ including the empty face and for all $i< \dim \link_\Gamma(F)$, one has
$\tl{H}_i(\link_\Gamma(F); K)=0$ (see \cite[Theorem 8.1.6]{hibi}). In particular, applying this with $F=\tohi$ we see
that if $\Gamma$ is CM, then for $i=\dim \Gamma-1$, we must have $\tl H_i(\Gamma; K)=0$.

\begin{thm}\label{seq CM}
Suppose that $H$ is a connected graph with at least 1 edge. Let $\Delta=\Delta(\L(H))$. Then the following are
equivalent.
\begin{enumerate}
\item \label{seq CM1} $\Delta$ is vertex decomposable.

\item \label{seq CM2} $\Delta$ is shellable.

\item \label{seq CM3} $\Delta$ is sequentially CM (over some field).

\item \label{seq CM4} For some positive integer $r$, there is an $r$-graph $H_0$ in which every level 2 vertex with
    degree 3 has a leaf neighbor and $H$ can be constructed from $H_0$ by consecutively applying the operations
    \pref{A} and \pref{B} of \pref{split}\pref{split 3}.

\item \label{seq CM5} If $H'$ is the graph obtained by splitting all vertices of $H$ with degree 2 which are not in
    any triangle, then every connected component of $H'$ is an edge except at most one. The only non-edge connected
    component of $H'$, if exists, is an $r$-graph for a positive integer $r$, in which every level 2 vertex with
    degree 3 has a leaf neighbor.
\end{enumerate}
\end{thm}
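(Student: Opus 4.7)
The plan is to close the cycle $(1)\Rightarrow(2)\Rightarrow(3)\Rightarrow(5)\Rightarrow(4)\Rightarrow(1)$. The first two implications are standard facts about simplicial complexes. The equivalence $(4)\iff(5)$ is essentially a repackaging of Proposition \pref{split}: splitting $H$ produces an $H'$ whose unique non-edge component (if any) plays the role of $H_0$ in (4), and operations (A) and (B) reverse the splitting, so the leaf-neighbor condition on $H_0$ transfers between the two formulations without change.

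For $(3)\Rightarrow(5)$, first apply Proposition \pref{split} to obtain a non-edge component $H_0$ of $H'$ with $\Delta_0:=\Delta(\L(H_0))$ sequentially CM. Proposition \pref{1 deg>3} limits $H_0$ to at most one vertex of degree $\geq 4$, and since $\Delta_0^{[2]}$ is CM it is strongly connected, so Proposition \pref{st conn Del2} (or, when all degrees are at most 3, Corollary \pref{no deg>4}) makes $H_0$ into an $r$-graph. The substantive work is the leaf-neighbor condition. Suppose for contradiction that some level-2 vertex $x$ of degree 3 has neighbors $a,b,y$, where $a,b$ are the endpoints of a level-1 path of length 1 and $y$ is not a leaf. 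Condition \pref{st conn Del2 23} forces $y$ to be either another level-2 vertex of degree 3 or an endpoint of a level-1 path, and in both cases the degree bounds prevent $x$ and $y$ from sharing a common neighbor in $H_0$. Hence the only facets of $\Delta_0^{[2]}$ containing the edge $\{xy\}$ are the two 3-stars centered at $x$ and at $y$, so $\link_{\Delta_0^{[2]}}(\{xy\})$ is a disjoint union of two edges on four distinct vertices: it is disconnected and one-dimensional. Reisner's criterion then forces $\tilde H_0(\link;K)=0$, contradicting sequential Cohen--Macaulayness of $\Delta_0$.

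For $(4)\Rightarrow(1)$, the ``moreover'' clause of Proposition \pref{split} reduces the task to proving that $\Delta_0$ is vertex decomposable whenever $H_0$ is an $r$-graph with the leaf-neighbor condition. I would induct on $|\V(H_0)|$, handling the base cases (stars, triangles, and the graphs of Figure \pref{fig1}) via Theorem \pref{Del main}. For the inductive step, peel leaves of $H_0$ outside-in: a level-3 leaf together with its degree-3 level-2 parent, then the remaining level-2 leaves, then endpoints of level-1 paths, and finally the central star at $v$. At each stage the peeled leaf edge is a free vertex of $\Delta_0$ whose link is a simplex (the edges at the leaf's endpoint form a star, hence a clique in $G_0$) and whose deletion is the clique complex of a smaller graph still satisfying the conditions of (4), hence vertex decomposable by induction.

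The main obstacle is the bookkeeping in $(4)\Rightarrow(1)$: a single naive leaf removal can destroy the $r$-graph structure — by shifting the unique high-degree vertex, or by turning a level-2 degree-3 vertex into a degree-2 vertex no longer in a triangle — so finding the correct peel order, possibly combined with simultaneous removals handled through Lemmas \pref{add new ver} and \pref{free vertices vdec}, is what makes the induction close. The Reisner computation in $(3)\Rightarrow(5)$ is conceptually clean once one locates the offending link, but verifying ``no common neighbor of $x$ and $y$'' requires a short case analysis on the position and level of $y$.
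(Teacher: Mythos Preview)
Your overall scheme and the handling of $(4)\iff(5)$ match the paper. For $(3)\Rightarrow(5)$ your link argument is correct and in fact cleaner than the paper's: the paper constructs, separately in the two cases for $y$, an explicit $1$-cycle $\sigma\in\tilde C_1(\Delta_0^{[2]};K)$ and shows it cannot bound by tracking the coefficient of $\{e_4,e_6\}$, whereas you observe directly that $\link_{\Delta_0^{[2]}}(xy)$ is two disjoint edges and invoke Reisner on that link. The ``no common neighbor'' check you flag does go through in both cases, since $y$ has degree exactly $3$ and its two neighbors other than $x$ are disjoint from $\{a,b\}$; this is what guarantees there is no triangle on the edge $xy$ and that the four vertices of the link are distinct.

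The gap in $(4)\Rightarrow(1)$ is not merely a bookkeeping detail in the peel order: the strategy of peeling only \emph{leaf} edges cannot work in principle. After you remove the level-3 leaf attached to a degree-3 level-2 vertex $y$, that $y$ has degree $2$ and sits in the triangle $yab$; no edge incident to $y$ is a leaf edge, and your peel list never returns to it. The same obstruction arises for level-2 vertices that already had degree $2$ (case~\pref{st conn Del2 22} of Proposition~\ref{st conn Del2}), and, more decisively, for level-1 cycles: there every vertex has degree exactly $3$, there are no leaf edges anywhere in that part of $H_0$, and no amount of leaf-peeling reduces such an $H_0$ to a star. The paper's induction instead removes carefully chosen \emph{non-leaf} edges and verifies shedding by hand: for a level-2 degree-2 vertex $x$ in the triangle $xab$ it deletes $e=xa$ and checks that the two facets $\{va,ab\}$ and $\{xb,ab\}$ of $\link_{\Delta_0}(e)$ sit inside the facets $\{va,ab,vb\}$ and $\{xb,ab,vb\}$ of $\Delta_0-e$; for a level-1 cycle edge the link has up to four facets and an analogous containment is checked. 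In these cases $\link_{\Delta_0}(e)$ is \emph{not} a simplex, so your ``free vertex with simplex link'' picture does not apply, and one must also verify that $H_0-e$ remains an $r$-graph satisfying the hypothesis of \pref{seq CM4} so that the induction closes.
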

\begin{proof}
Note that if $r \leq 3$, then there are only finitely many $r$-graphs (up to isomorphism) which are either a path with
length 1 or a triangle or a 3-star or one of the graphs in Figure \ref{fig1}. Thus if the maximum degree of $H$ is at
most 3, then the result follows from \pref{no deg>4}. So we assume that $r$ and the maximum degree of $H$ are at least
4.

\pref{seq CM1} \give\ \pref{seq CM2} \give\ \pref{seq CM3} are well-known. Also the proof of \pref{seq CM4} $\iff$
\pref{seq CM5} follows the fact that``undoing'' each vertex splitting is indeed applying \pref{A} or \pref{B} of
\pref{split}\pref{split 3} which is proved in the proof of \pref{split 2} \give\ \pref{split 3} in \pref{split}.

\pref{seq CM3} \give\ \pref{seq CM4}: Suppose that $\Delta$ is sequentially CM over a field $K$ and $r\geq 4$ is the
maximum vertex degree in $H$. According to \pref{1 deg>3}, there is exactly one vertex $v$ of $H$ with degree $\geq 4$.
Further, by \pref{split} and \pref{st conn Del2} and since every CM complex is strongly connected, it follows that $H$
can be constructed from an $r$-graph $H_0$ rooted at $v$ by consecutively applying the operations \pref{A} and \pref{B}
of \pref{split}\pref{split 3} and $\Delta_0=\Delta(\L(H_0))$ is sequentially CM. Thus we just have to show that a level
2 vertex $x$ of $H_0$ with degree 3 has a leaf neighbor. Suppose it is not true. According to \pref{st conn
Del2}\pref{st conn Del2 2}, one of the following cases may occur.

\textbf{Case 1:} $x$ has a level 2 neighbor $y$ with degree 3. Assume that $a, b$ are the level 1 neighbors of $x$ and
$c$ is a level 1 neighbor of $y$. Let $e_1=va$, $e_2=vc$, $e_3=ax$, $e_4=xy$, $e_5=yc$ and $e_6=xb$ be vertices of
$\Delta_0$  with $e_i<e_j \iff i<j$ (see Figure \ref{fig7}(a)). Consider $\sigma= \{e_3,e_4\}+ \{e_4,e_5\}-
\{e_2,e_5\}- \{e_1,e_2\}+ \{e_1,e_3\}\in \tl C_1(\Delta_0^{[2]})$. Then $\rnd_1(\sigma)=0$ and as $\tl
H_1(\Delta_0^{[2]};K)=0$, we must have $\sigma\in \im\rnd_2$, that is, there is a $\delta\in \tl C_2(\Delta_0^{[2]})$
with $\rnd_2(\delta)=\sigma$. Since $\{e_3,e_4\}$ appears only in the 2-face $F=\{e_3,e_4,e_6\}$, the coefficient of
$F$ in $\delta$ must be 1. But then in $\rnd_2(\delta)$ the coefficient of $\{e_4,e_6\}$ is 1, because $F$ is the only
2-face containing $\{e_4,e_6\}$. This contradiction shows that case 1 cannot occur.

\textbf{Case 2:} The third neighbor $y$ of $x$ has level 1. Again assume that $a,b$ are the other level 1 vertices
adjacent to $x$ and set $e_1=va$, $e_2=vy$, $e_3=ax$, $e_4=xy$ and $e_6=xb$ with $e_i<e_j \iff i<j$ (see Figure
\ref{fig7}(b)). By a similar argument as in case 1 with $\sigma=\{e_3,e_4\}-\{e_2,e_4\}-\{e_1,e_2\}+\{e_1,e_3\} \in
\ker \rnd_1$, we again reach a contradiction.

\begin{figure}
\center\includegraphics[scale=1.6]{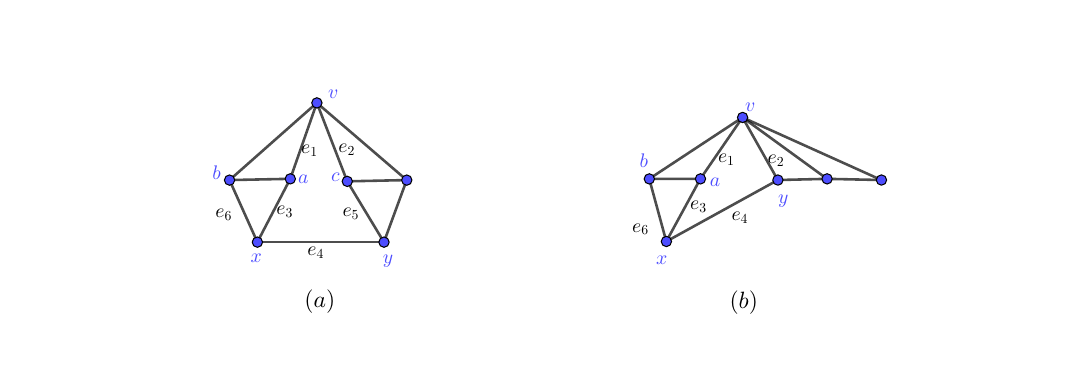}
\caption{Illustrations for the proof of Theorem \ref{seq CM}; (a) is case 1 and (b) is case 2 \label{fig7}}
\end{figure}

\pref{seq CM4} \give\ \pref{seq CM1}: We show that $\Delta_0=\Delta(\L(H_0))$ is vertex decomposable. Then the result
follows from the ``moreover'' statement of \pref{split}. For this, we use induction on the number of edges of $H_0$.
Assume that $H_0$ is rooted at the vertex $v$. First we show that for every vertex $e$ of $\Delta_0$ not incident to
$v$, $\link_{\Delta_0}(e)$ is vertex decomposable. Indeed, if $e$ is in a triangle of $H_0$, whose other two edges are
$f_1$ and $f_2$, then every edge of $H_0$ adjacent to $e$ in $\L(H_0)$, is also adjacent to either $f_1$ or $f_2$ and
since $f_1$ and $f_2$ are adjacent, the pure 1-dimensional complex $\link_{\Delta_0}(e)$ is connected and by
\pref{graph ver dec}, vertex decomposable. If $e$ is not in any triangle, then the conditions of part \pref{seq CM4},
ensures that $e$ is a leaf edge and so again $\link_{\Delta_0}(e)$ is connected and vertex decomposable.

Assume that $H_0$ has a level 3 vertex $x$. Then $x$ is a leaf and is adjacent to a level 2 vertex $y$ which is in a
triangle. Let $e=xy$. Then the only facet of $\link_{\Delta_0}(e)$ is strictly contained in this triangle and hence $e$
is a shedding vertex of $\Delta_0$. Also if $H_0'=H_0-e-x$, then $\Delta_0-e=\Delta(\L(H_0'))$ and $H_0'$ satisfies the
conditions of \pref{seq CM4}.  So $\Delta_0-e$ is vertex decomposable by induction hypothesis and hence $\Delta_0$ is
also vertex decomposable. Thus we can assume $H_0$ has no level 3 vertex. By a similar argument we can also assume that
no level 2 vertex of $H_0$ is a leaf.

Since every level 2 vertex with degree 3 is adjacent to a level 3 vertex which we have assumed does not exist, if $x$
is a level 2 vertex, then $\deg(x)= 2$. So by the assumptions of \pref{seq CM4}, $x$ is adjacent to the endpoints $a,b$
of a level 1 path with length 1. Let $e=xa$. Then the facets of $\link_{\Delta_0}(e)$ are $\{va, ab\}$ and $\{xb, ab\}$
which are contained in the facets $\{va, ab, vb\}$ and $\{xb, ab, vb\}$ of $\Delta_0-e=\Delta(\L(H_0-e))$,
respectively. Because $H_0-e$ satisfies the conditions of \pref{seq CM4}, $\Delta_0-e$ is vertex decomposable by
induction hypothesis and hence so is $\Delta_0$. Therefore, we can assume that $H_0$ has no vertex with level 2.

Let $C$ be a level 1 cycle of $H_0$ with two adjacent vertices $x$ and $y$. Assume that in $C$, $a\neq y$ is adjacent
to $x$ and $b\neq x$ is adjacent to $y$ (we may have $a=b$). Let $e=xy$. The facets of $\link_{\Delta_0}(e)$ are $\{vx,
xa\}$, $\{vy, yb\}$, $\{vx, vy\}$ and if $a=b$, $\{xa,ay\}$, which are in the following facets of $\Delta_0-e$,
respectively: $\{vx, xa, va\}$, $\{vy, yb,vb\}$, the $r$-star at $v$ and the 3-star at $a$. Thus again it follows that
$e$ is a shedding vertex of $\Delta_0$ and $\Delta_0-e$ is vertex decomposable by induction hypothesis and hence
$\Delta_0$ is also vertex decomposable.

If $P$ is a level 1 path of $H_0$ starting with the vertices $a$ and $b$, then similar to the above paragraphs one can
see that $e=ab$ is a shedding vertex with $\Delta_0-e$ vertex decomposable and it follows that $\Delta_0$ is vertex
decomposable. If $H_0$ has no level 1 cycle or level 1 path, then it is just an star centered at $v$. In this case
$\Delta_0$ is a simplex and the result follows.
\end{proof}

From this theorem we see that if $H$ is the graph in Figure \ref{fig6} and $\Delta=\Delta(\L(H))$, then $\Delta$ is not
sequentially CM, that is, $\Delta^{[2]}$ is not CM although it is strongly connected. Also as another example, Theorem
\ref{seq CM} shows that the line graph of the graph in Figure \ref{fig8} has a sequentially CM clique complex.

\begin{figure}
\begin{center}
\includegraphics{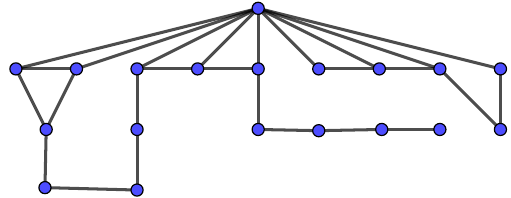}
\caption{A graph whose line graph has a sequentially CM clique complex \label{fig8}}
\end{center}
\end{figure}

                            \subsection*{An algorithm}

At the end of this paper, we show that using Theorem \ref{seq CM}, we can present a linear time algorithm which takes
as input a graph $G$ and checks whether $G$ is a line graph or not and if yes, says whether $\Delta(G)$ is sequentially
CM. Checking if $G$ is a line graph and even returning an $H$ such that $G=\L(H)$ has been previously done by Lehot in
\cite{lehot} in a linear time. Thus we can assume that $H$ is given and we must find out if $\Delta(\L(H))$ is
sequentially CM. We assume that $H$ is given as lists of neighbors of vertices and $n$ is the number of vertices of
$H$. Here we state an algorithm, the correctness of which is ensured by Theorem \ref{seq CM} and show that its worst
case time complexity is $\Theta(n)$. In this algorithm, we use breadth-first search (BFS) which can be found in for
example \cite{west}.

\paragraph{Step 1:}
Run through the vertices of $H$ and compute the degree of each vertex. If for a second time a vertex with degree more
than three is visited, return false. Also for each vertex $x$ with degree 2 and with neighbors $a$ and $b$, check if
$a$ is a neighbor of $b$. If not, split the vertex $x$ by removing the edge $xb$ and adding a new vertex adjacent only
to $b$.

Note that if $H$ has more than one vertex with degree $>3$, then after at most checking $3(n-1)+r+1\leq 4n$ edges where
$r$ is the degree of the first vertex with degree $>3$ that we encounter, we will find out, return false and exit. Else
the number of edges is at most $3n/2$ and this step is carried out with $\Theta(n)$ time complexity. Also note that the
obtained graph has at most $2n$ vertices.

\paragraph{Step 2:}
Compute the connected components of the obtained graph (say, by BFS). If more than one connected component is not an
edge return false. If all connected components are edges, return true. Else let $H_0$ be the only connected component
which is not an edge. This step clearly needs $\Theta(n)$ time complexity.

\paragraph{Step 3:}
Find a vertex $v$ with maximum degree in $H_0$. Run a BFS starting at $v$ and mark each visited vertex with its level
which is the distance of the vertex from $v$. When visiting a level 2 vertex $y$ consider the following cases.
\begin{description}
\item[$\deg(y)=1$:] Let $a$ be the neighbor of $y$ (which has level 1). If $a$ has no level 1 neighbor (so that $a$
    is not the endpoint of a level 1 path), return false.
\item[$\deg(y)=2$:] The neighbors of $y$ should have level 1 and be adjacent. If not, return false.
\item[$\deg(y)=3$:] Then its neighbors should be two level 1 adjacent vertices and a vertex not yet visited. If not,
    return false.
\end{description}
Also when visiting a level 3 vertex $x$, if $x$ has not degree 1, return false. This step also consumes $\Theta(n)$
running time.
\paragraph{Step 4:} Return true.

Note that steps 2 and 3 can be carried out simultaneously.

We have to make two remarks regarding the correctness of the algorithm. First, if the maximum degree of $H$ is 3 and
$\Delta$ is sequentially CM, then the graph $H_0$ is a 3-star or one of the graphs in Figure \ref{fig1}. All of these
graphs are 3-graphs rooted at $v$, where $v$ can be any of the degree 3 vertices. Thus in step 3 it does not differ
which vertex with degree 3 we choose as $v$. Furthermore in step 3, we may visit a level 2 vertex with degree 3 which
has a level 2 neighbor $z$ not still visited, without returning false at that moment. In this case, when visiting the
vertex $z$, since $z$ has a visited level 2 neighbor, the algorithm returns false.

                            
\end{document}
`